\newcommand{\lvt}{\left|\kern-1.35pt\left|\kern-1.3pt\left|}
\newcommand{\rvt}{\right|\kern-1.3pt\right|\kern-1.35pt\right|}
\newtheorem{thm}{Theorem}[section]
\newtheorem{cor}[thm]{Corollary}
\newtheorem{lem}[thm]{Lemma}
\newtheorem{prop}[thm]{Proposition}
\newtheorem{ass}[thm]{Assumption}
\theoremstyle{remark}
\newtheorem{rem}{Remark}[section]
 \def\la{{\langle}}
 \def\ra{{\rangle}}
 \def\ve{{\varepsilon}}
 \def\d{\mathrm{d}}
 \def\e{\mathrm{e}}
 \def\i{\mathrm{i}}
 \def\sph{{\mathbb{S}^{d-1}}}
 \def\a{{\alpha}}
 \def\b{{\beta}}
 \def\g{{\gamma}}
 \def\k{{\kappa}}
 \def\bka{\boldsymbol{\kappa}}
 \def\t{{\theta}}
 \def\l{{\lambda}}
 \def\s{\sigma}
 \def\la{{\langle}}
 \def\ra{{\rangle}}
 \def\ve{{\boldsymbol{\varepsilon}}}
 \def\jb{{\mathbf j}}
 \def\kb{{\mathbf k}}
 \def\mb{{\mathbf m}}
 \def\CH{{\mathcal H}}
 \def\CV{{\mathcal V}}
 \def\BB{{\mathbb B}}
 \def\CC{{\mathbb C}}
 \def\NN{{\mathbb N}}
 \def\RR{{\mathbb R}}
 \def\SS{{\mathbb S}}
 \def\TT{{\mathbb T}}
 \def\ZZ{{\mathbb Z}}
\def\lla{\langle{\kern-2.5pt}\langle}      
\def\rra{\rangle{\kern-2.5pt}\rangle}      
\def\balpha{\boldsymbol{\alpha}}
\def\bbeta{\boldsymbol{\beta}}
\newcommand{\wh}{\widehat}
\def\one{{\mathbf{1}}}
\def\f{\frac}
\begin{document}
 
\def\x{\boldsymbol{x}}
\def\y{\boldsymbol{y}}
\def\m{\boldsymbol{m}}
\def\a{\boldsymbol{a}}
\def\b{\boldsymbol{b}}
\def\e{\boldsymbol{e}}
\def\u{\boldsymbol{u}}

\def\A{\mathbf{A}}
\def\B{\mathbf{B}}
\def\bX{\mathbf{X}}
\def\bq{\mathbf{q}}
\def\bp{\mathbf{p}}
\def\bxi{\boldsymbol{\xi}}
\def\bzeta{\boldsymbol{\zeta}}

\def\v{\boldsymbol{v}}
\def\z{\boldsymbol{z}}
\def\p{\boldsymbol{p}}
\def\bS{\mathbf{S}}
\def\K{\mathbf{K}}
\def\P{\mathbf{P}}
\def\M{\mathbf{M}}
\def\Q{\mathbf{Q}}
\def\N{\mathbb{N}}
\def\R{\mathbb{R}}
\def\RR{\mathbb{R}}
\def\SS{\mathbb{S}}
\def\Z{\mathbb{Z}}
\def\M{\mathbf{M}}
\def\om{\mathbf{\Omega}}
\def\bphi{\boldsymbol{\phi}}
\def\bxi{\boldsymbol{\xi}}
\def\bmu{\boldsymbol{\mu}}
\def\btheta{\boldsymbol{\theta}}
\def\balpha{\boldsymbol{\alpha}}
\def\bbeta{\boldsymbol{\beta}}
\def\bgamma{\boldsymbol{\gamma}}
\def\taue{\tau_{\varepsilon}}
\def\hh{\overline{H}}
\def\h{\overline{h}}
\def\g{\overline{g}}
\def\tauer{\tau_{\varepsilon/r}}
\def\1{\mathbf{1}}
\def\bk{\mathbf{k}}
\def\t{\theta}

%\begin{document}
\title[A generalized Pell's equation]
{A Generalized Pell's equation for a class of multivariate orthogonal polynomials}
\author{Jean B. Lasserre}
\address{LAAS-CNRS and Toulouse School of Economics (TSE)\\ LAAS, 7 avenue du Colonel Roche\\ 31077 Toulouse C\'edex 4, France} \email{lasserre@laas.fr}
\author{Yuan Xu}
\address{Department of Mathematics\\ University of Oregon\\ Eugene, OR 97403-1222, U.S.A.} \email{yuan@uoregon.edu}

\date{}

\thanks{The first author's work is supported by the AI Interdisciplinary Institute ANITI funding through the French program ``Investing for the Future PI3A" under grant agreement number ANR-19-PI3A-0004
and the research is also part of the program DesCartes supported by the National Research Foundation,  Prime Minister's Office, Singapore under its Campus for Research Excellence and Technological Enterprise (CREATE) program. The second author's work is partially supported by Simons Foundation Grant \# 849676.}

\begin{abstract}
 We extend the polynomial Pell's equation satisfied by univariate Chebyshev polynomials on $[-1,1]$ from one
 variable to several variables, using orthogonal polynomials on regular domains that include cubes, balls, and
 simplexes of arbitrary dimension. Moreover, we show that such an equation is 
 strongly connected (i) to  a certificate of positivity (from real algebraic geometry) 
 on the domain, as well as (ii) to the  Christoffel functions of the equilibrium measure on the domain. In addition, the solution to Pell's equation 
 reflects an extremal property of orthonormal polynomials associated with an entropy-like criterion. \\
 {\bf MSC: 44A60 14Q30 42C05 90C25 90C46 32U15 14P10}
 \end{abstract}
\maketitle

\section{Introduction}
\setcounter{equation}{0}

The starting point of our investigation is polynomial Pell's equation 
\begin{equation}\label{intro-0}
T_n(x)^2+(1-x^2)\,U_{n-1}(x)^2\,=\,1\,,\quad\forall x\in \R,\quad n\in\N\,,
\end{equation}
where $T_n \in \Z[x]$ and $U_n \in \Z[x]$ are the Chebyshev polynomial of the first and second kinds, respectively. 
We aim at extending this identity to multivariate polynomials in the form of 
\begin{equation}\label{intro-0B}
 P^\mu_n(\x)+\sum_{i=1}^{r}\phi_i(\x)\,P^{\phi_i\cdot\mu}_{n,i}(\x)\,=\,1, \quad\forall \x \in \R^d, \quad n \in \N,
\end{equation}
where $\phi_i$'s are some products of generators of a domain $\Omega\subset\R^d$, $P^\mu_n$ (resp. $P^{\phi_i\cdot\mu}_{n,i}$) is 
the sum of the squares of all orthonormal polynomials of degree $n$ (resp. $n-\mathrm{deg}(\phi_i)$) with respect to the 
equilibrium measure $\mu$ on $\Omega$ (resp. the measure $\phi_i\cdot\mu =\phi_id\mu$ with the same support $\Omega$).

A theoretical study of this problem has been carried out recently in \cite{cras}, where it is pointed out that the problem
is connected to several other topics, which we describe below as they motivate our study. We first define the Christoffel
function. Let $\mu$ be a measure supported on $\Omega \in \RR^d$. For $n \in \NN$, let $\R[\x]_n$ be the 
space of polynomials of degree $n$ in $\x \in \RR^d$. The Christoffel function associated with $\mu$, denoted by 
$\Lambda^\mu$, is defined by  
\begin{equation} \label{eq:variational}
 \Lambda^\mu_n(\bxi) : = \inf_{p\in\R[\x]_n, p(\bxi)=1} \int p(\y)^2 \,d\mu(\y),\qquad\forall\bxi\in\R^d.
\end{equation}

Let $x\mapsto g(x):=1-x^2$, $s(n):= \binom{1+n}{n}$, and let $\mu$ be the Chebyshev (equilibrium) measure 
$dx/\pi\sqrt{1-x^2}$ of $[-1,1]$, whereas $g\cdot\mu:=\sqrt{1-x^2}dx/\pi$. After scaling, a summation of \eqref{intro-0} 
leads to the identity 
\begin{equation} \label{intro-1}
\Lambda^\mu_n(x)^{-1}+(1-x^2)\,\Lambda^{g\cdot\mu}_{n-1}(x)^{-1}\,=\,s(n)+s(n-1),
\end{equation}
where $\Lambda^\mu_n$ (resp. $\Lambda^{g\cdot\mu}_n$) is the Christoffel function of degree $n$ associated with  
$\mu$ (resp. $g\cdot\mu$). The Christoffel function can also be formulated in terms of the moments of $\mu$; 
see the next section. For the Chebyshev measure, the formulation is of the form 
\begin{align*}
\Lambda^\mu_n(x)^{-1}\,& =\v_n(x)^T\M_n(\mu)^{-1}\v_n(x)\,,\quad \forall x\in\R,\\
\Lambda^{g\cdot\mu}_{n-1}(x)^{-1} \ &=\v_{n-1}(x)^T\M_{n-1}(g\cdot\mu)^{-1}\v_{n-1}(x)\,,\quad\forall x\in\R,
\end{align*}
where $\v_n(x)=(1,x,x^2,\ldots,x^n)$, $\M_n(\mu)$ is the matrix of moments up to degree $2n$ of $\mu$, 
and  $\M_{n-1}(g\cdot\mu)$ is the matrix of moments up to degree $2n-2$ of $g\cdot\mu$.

While polynomial Pell's equation \eqref{intro-0} originates from Pell's equation in algebraic number theory, it also can be 
regarded from a different angle. In \cite{cras} it was observed that \eqref{intro-0} (resp. \eqref{intro-1}) is the 
\emph{Markov-Luk\'acs certificate} (resp. \emph{Putinar certificate})
that the constant polynomial $x\mapsto p(x)=1$ for all $x$, is positive on $[-1,1]$. Indeed while the former states 
that a polynomial $p$ of degree $2n$, nonnegative on $[-1,1]$, can be written in the form
\[p(x)\,=\,q_0(x)^2+(1-x^2)\,q_1(x)^2\,,\quad\forall x\in\R\,,\]
for two single squares of polynomials $q_0$ of degree $n$ and $q_1$ of degree $n-1$, the latter states
that 
\begin{equation}
    \label{intro-univ-put}
p(x)\,=\,\sigma_0(x)+(1-x^2)\,\sigma_1(x)\,,\quad\forall x\in\R\,,\end{equation}
for two polynomials $\sigma_0$ of degree $2n$ and $\sigma_1$ of degree $2n-2$ that are \emph{sum-of-squares}. 
The latter, importantly, has a multivariate generalization \cite{putinar} to compact basic semi-algebraic sets of $\R^d$ whereas the 
former is specific to the univariate case. Furthermore, the reciprocals of the Christoffel functions $\Lambda^\mu_n$ 
and $\Lambda^{g\cdot\mu}_n$ (both polynomial sum-of-squares) satisfy an extremal property: Namely the couple $(\M_n(\mu)^{-1},\M_{n-1}(g\cdot\mu)^{-1})$ 
of their respective Gram matrices is the unique optimal solution for the optimization problem
\begin{equation}
    \label{intro-logdet}
\begin{array}{rl}  {\displaystyle\max_{\A,\B\succ0}}& \{\log\mathrm{det}(\A)+\log\mathrm{det}(\B): \\
\mbox{s.t.}& 
s(n)+s(n-1)=\,\underbrace{\v_n(x)^T\A\v_n(x)}_{\sigma_0(x)}
       +\underbrace{\v_{n-1}(x)^T\B\v_{n-1}(x)}_{\sigma_1(x)}\,(1-x^2)\\
       &\forall x\in\R\,\}\,,
\end{array}
\end{equation}
where the ``$\sup$" is over all possible positive semidefinite Gram matrices $\A$ and $\B$
of the sum-of-squares polynomials $\sigma_0$ and $\sigma_1$ respectively.
Interestingly, this is a direct consequence and interpretation of a result by  Nesterov  \cite{nesterov} on a one-to-one 
correspondence between the respective interiors of the convex cone 
of polynomials of the form \eqref{intro-univ-put} and its dual; see \cite{cras,nesterov}.

\subsection*{Contribution}
In \cite{cras}, the identity \eqref{intro-0} is extended to \eqref{intro-0B} in several bivariate cases, namely for set 
$\Omega\subset\R^2$ being the Euclidean ball, the triangle, and the cube, but only for $n=1,2,3$. 
In the present paper:

(i) We first show that these extensions hold for all $n \ge 1$ and, furthermore, for all dimensions, not just for $d =2$. 
Together, our results consist of a significant extension of \eqref{intro-0} to the multivariate setting and provide the first 
set of examples for \eqref{intro-0B}. In particular, it shows that \eqref{intro-1}, as a partition of unity for $[-1,1]$, has 
a natural multivariate analog for the regular domains in our list.

The proof of our extensions comes from a special property of multivariate orthogonal polynomials. More specifically, 
the property is an analog of the {\it addition formula} for spherical harmonics, which are homogeneous harmonic 
polynomials restricted on the unit sphere $\SS^d$ of $\RR^{d+1}$. Indeed, let $\CH_n^{d+1}$ be the space of spherical 
harmonics of degree $n$ and let $\{Y_\ell^n: 1 \le \ell \le \dim \CH_n^{d+1}\}$ be an orthonormal basis of $\CH_n^d$; then 
the addition formula states that 
\begin{equation} \label{intro:addtion}
   \sum_{\ell = 1}^{\dim \CH_n^{d+1}} Y_\ell^n(\bxi) Y_\ell^n (\boldsymbol{\eta}) = \frac{2 n+d-1}{d-1} C_n^{\f{d-1}{2}} (\la \bxi,\boldsymbol{\eta}\ra), 
   \quad \bxi, \boldsymbol{\eta} \in \SS^d, 
\end{equation}
where $C_n^\l$ denotes the usual Gegenbauer polynomial of degree $n$ and $\la \cdot,\cdot\ra$ is the ordinary inner
product in $\RR^d$. It shows, in particular, that 
$$
    \sum_{\ell = 1}^{\dim \CH_n^{d+1}} Y_\ell^n(\bxi)^2 = \frac{2 n+d-1}{d-1} C_n^{\f{d-1}{2}} (1),\quad\forall\bxi\in\SS^d\,,
$$
which is a sum of square formula or an analog of \eqref{intro-0B}. For the Chebyshev weight function on each of 
the regular domains in our list, we have an analog of the addition formula, which however is often equal to a sum on 
its right-hand side, instead of one term as in \eqref{intro:addtion}. Nevertheless, in each case, we shall show that 
an appropriate combination with associated orthogonal polynomials for a family of weight functions, involving 
generators of the set $\Omega$, leads to an identity that can be used to obtain \eqref{intro-0B}. The identity is of 
interest in its own as it links all orthonormal polynomials of degree $n$ for $\mu$ with all orthonormal polynomials
of degree $n-\mathrm{deg}(\phi_i)$ for $\phi_i\cdot\mu$, $i=1,\ldots,r$, where $(\phi_i)_{i=1}^r$ is a set of generators
of the domain $\Omega$. 

Moreover, by summing over $n=0,1,\ldots,t$, we obtain an identity satisfied by reciprocals 
of the Christoffel functions associated with measures $\mu$ and $\phi\cdot\mu$. As an analogy with \eqref{intro-1}, 
it is fair to state that these Christoffel functions are solutions of a generalized polynomial Pell's equation for every fixed 
degree $n\in\N$. 

(ii) In addition, the inverses of the associated moment matrices for these measures satisfy an extremal 
property since they also form the unique optimal solution of a multivariate version of the optimization problem 
\eqref{intro-logdet}.

(iii) Next, in the general case of a compact set $\Omega$ with nonempty interior,
and whose set of generators is given and fixed, we show that to its associated equilibrium measure
$\mu$ is associated a sequence of polynomials $(p_n)_{n\in\N}$ of increasing degree, positive on the domain. We show that the sequence $(p_n)$ is related to the constant polynomial $\1$:

- in a weak form when $p_n$ is seen as a density with respect to $\mu$: the resulting sequence of probability 
measures $(p_n\mu)_{n\in\N}$ converges weakly to $\mu$.

- in a stronger form under additional assumptions: the sequence $(p_n)_{n\in\N}$ converges to $\1$, uniformly on 
compact subsets of $\Omega$.\\

Altogether, our generalization of \eqref{intro-1} to certain multivariate settings reveals unexpected links with seemingly 
disconnected fields (orthogonal polynomials, equilibrium measures, certificates of positivity, a certain conic duality 
in optimization), which we hope will be of interest to researchers working in those fields.  

The paper is organized as follows. After a brief section on notations and preliminaries, we discuss the background
and motivation in the third section, then describe our results in relatively simple terms of the Christoffel functions of 
appropriate measures on the family of semi-algebraic sets $\Omega \subset\R^d$ that we consider in the fourth
section. The proof will be given in the fifth section, which contains the full strength of the identity derived from the 
addition formula. The nature of the proof means it is more involved and technique, for which we will introduce
additional notations and restate our results for easier access and clarity of exposition. 

\section{Notation and preliminaries}
\setcounter{equation}{0}
\label{sec:notation}
\subsection*{Notation and definitions}

Let $\R[\x]$ denote the ring of real polynomials in the variables $\x=(x_1,\ldots,x_d)$ and $\R[\x]_n\subset\R[\x]$ be 
the subset of polynomials of total degree at most $n$. Denote by $\1\in\R[\x]$ the polynomial equal to $1$ for all $\x$.
Let $\N^d_n:=\{\balpha\in\N^d:\vert\balpha\vert\leq n\}$, where 
$\vert\balpha\vert=\sum_i\alpha_i$, which has the cardinal $s(n):=\binom{d+n}{d}$. Let 
$\v_n(\x)=(\x^{\balpha})_{\balpha\in\N^d_n}$ be the vector of monomials up to degree $n$, where the monomials are 
listed in, say, the graded lexicographical order, and let $\Sigma[\x]\subset\R[\x]$ (resp. $\Sigma[\x]_n\subset\R[\x]_{2n}$) 
be the convex cone of polynomials (resp. of polynomials of total degree at most $2n$) that are sum-of-squares (SOS in short). 
For every $p\in\R[\x]_n$ write
\[
  \x\mapsto p(\x)\,=\,\langle\p,\v_n(\x)\rangle, \quad \forall \x\in\R^d,
\]
where $\p\in\R^{s(n)}$ is the vector of coefficients of $p$ in the monomial basis $(\x^{\balpha})_{\balpha\in\N^d}$.
For a real symmetric matrix $\A=\A^T$ the notation $\A\succeq0$ (resp. $\A\succ0$) stands for $\A$ is positive semidefinite 
(p.s.d.) (resp. positive definite (p.d.)).

The support of a Borel measure $\mu$ on $\R^d$ is the smallest closed set $A$ such that $\mu(\R^d\setminus A)=0$, 
and such a  set $A$ is unique. With $\Omega \subset\R^d$ compact, denote by $\mathscr{C}(\Omega)$ 
the Banach space of real continuous functions on $\Omega$ equipped with the sup-norm. Its topological dual
$\mathscr{C}(\Omega)^*$ is the Banach space $\mathscr{M}(\Omega)$ of finite signed Borel measures on $\Omega$, 
equipped with the total-variation norm.

\subsection*{Moment matrix}
Associated with $n\in\N$ and a real vector $\bphi=(\phi_{\balpha})_{\balpha\in\N^d_{2n}}$, are  the (Riesz) linear functional $\phi\in\R[\x]_{2n}^*$ defined by:
\[p\mapsto \phi(p)\,:=\,\langle \p,\bphi\rangle\,=\,\sum_{\balpha\in\N^d_{2n}}p_{\balpha}\,\phi_{\balpha}\,,\quad\forall p\in\R[\x]_{2n}\,,\]
and the real ``moment" matrix $\M_n(\bphi)$ (or $\M_n(\phi)$)
with rows and columns indexed by $\balpha\in\N^d_n$ and with entries
\[\M_n(\bphi)(\balpha,\bbeta)\,:=\,\phi(\x^{\balpha+\bbeta})\,=\,\phi_{\balpha+\bbeta}\,\quad\forall\balpha,\bbeta\in\N^d_n\,.\]
If $\mu$ is a finite Borel measure on $\R^d$ with all moments $\bmu=(\mu_{\balpha})_{\balpha\in\N^d}$
assumed to be finite, then
\[
\M_n(\mu)(\balpha,\bbeta) :=\,\int \x^{\balpha+\bbeta}\,d\mu = \mu_{\balpha+\bbeta},\quad\balpha,\bbeta\in\N^d_n\,,
\]
and obviously, $\M_n(\mu)\succeq0$ for all $n$ since 
\[
\langle \p,\M_n(\mu)\,\p\rangle =\int p^2\,d\mu\,\geq\,0\,,\quad\forall p\in\R[\x]_n.
\]
On the other hand, given a real vector
$\bphi=(\phi_{\balpha})_{\balpha\in\N^d_{2n}}$, $\M_n(\bphi)\succeq0$ is only a necessary condition for the associated linear functional $\phi\in\R[\x]_{2n}^*$ to have a representing measure, i.e.,
\[\phi_{\balpha}\,=\,\phi(\x^{\balpha})\,=\,
\int \x^{\balpha}\,d\varphi\,,\quad\forall\balpha\in\N^d_{2n}\,,\]
for some Borel measure $\varphi$ on $\R^d$.

\subsection*{Localizing matrix}

Given a real sequence $\bphi\in\N^d_{2n}$ and a polynomial $g\in\R[\x]$, one may define the new real sequence $g\cdot\bphi\in\R^{s(2n)-\mathrm{deg}(g)}$ (and associated Riesz linear functional $g\cdot\phi\in\R[\x]_{2n-\mathrm{deg}(g)}^*$) by:
\[
(g\cdot\bphi)_{\balpha}\,=\,g\cdot\phi(\x^{\balpha})\,:=\,\phi(g\,\x^{\balpha})\,=\,
\sum_{\bbeta\in\N^d_n}g_{\bbeta}\,\phi_{\balpha+\bbeta}\,,\quad\forall\balpha\in\N^d_{2n-\mathrm{deg}(g)}.
\]
Then, setting $t_g:=\lceil\mathrm{deg}(g)/2\rceil\,(=\lceil t/2\rceil$), the \emph{localizing} matrix $\M_{n-t_g}(g\,\bphi)$ $(n\geq t_g$) 
associated with $g$ and $\bphi$ is just the moment matrix $\M_{n-t_g}(g\cdot\bphi)$ associated with
the (pseudo) moment vector $g\cdot\bphi\in\R^{s(2n)-\mathrm{deg}(g)}$.

\subsection*{Orthogonal polynomials and their kernels.}
Here we assume $\M_n(\mu)\succ0$ for all $n\in\N$, and therefore the inverse $\M_n(\mu)^{-1}$ is well-defined 
for all $n\in\N$. In particular, this is true in our case of interest, i.e.,  when the support $\Omega \subset\R^d$ of $\mu$ is 
compact with a nonempty interior and $\mu$ has a density with respect to the Lebesgue measure on $\Omega$. Under
the assumption, a sequence of orthogonal polynomials $(P_{\balpha})_{\balpha\in\N^d}\subset\R[\x]$ exists in $L^2(\mu)$. 

Let $(P_{\balpha})_{\balpha\in\N^d}\subset\R[\x]$ be a family of polynomials, with the degree of $P_{\balpha}$ being $|\balpha|$,
that are orthonormal with respect to $\mu$; that is, 
$$
  \int_{\RR^d} P_{\balpha}(\x) P_{\bbeta}(\x) \d \mu(\x) = \delta_{\balpha,\bbeta}, \qquad \balpha, \bbeta \in \N^d.
$$
Let $\CV_m^d$ be the space of orthogonal polynomials of degree exactly $m$. Let $\{P_{\balpha}: |\a| = m\}$ be
an orthonormal basis of $\CV_m^d$. Then the reproducing kernel of the space $\CV_m^d$ in $L^2(\mu)$ is defined by 
\begin{equation}\label{eq:P-kernel}
(\x,\y)\mapsto P^\mu_{m}(\x,\y) := \sum_{\vert\balpha\vert=m}P_{\balpha}(\x)\,P_{\balpha}(\y),\quad\forall\x,\y, \quad m\in\N,
\end{equation}
which is independent of the choice of bases. Summing over $0 \le m \le n$ gives the reproducing kernel of $\R[\x]_n$ in 
the space $L^2(\mu)$, 
\begin{equation} \label{eq:K-kernel}
K^\mu_n(\x,\y) =\sum_{j=0}^n  P^\mu_j(\x,\y) =  \sum_{\vert\balpha\vert \le n}P_{\balpha}(\x)\,P_{\balpha}(\y), 
\end{equation}
which is sometimes called the Christoffel-Darboux kernel. 
     
\subsection*{Christoffel function.} 
The Christoffel function $\Lambda^\mu_n:\R\to\R_+$ of degree $n$, associated with $\mu$, is defined by
\eqref{eq:variational} and it can also be defined in terms of the moment matrix by 
\[
\x\mapsto \Lambda^{\mu}_n(\x)^{-1} = \v_n(\x)^T\M_n(\mu)^{-1}\v_n(\x)\,,\quad\forall \x\in\R^d.
\]
Alternatively, in terms of the kernels $K_n^\nu(\cdot,\cdot)$, the Christoffel function satisfies 
\begin{equation} \label{ortho-poly-0}
\Lambda^{\mu}_n(\x)^{-1} = K^\mu_n(\x,\x) = \sum_{\balpha\in \N^d_n}P_{\balpha}(\x)^2, \quad\forall \x \in\R^d.
\end{equation}
It should be emphasized that the above identity holds for the nonnegative measure $\mu$, which is not necessarily 
a probability measure. 

In its variational characterization \eqref{eq:variational}, the Christoffel function is the optimal value of 
a quadratic convex optimization problem that can be solved efficiently. Its unique optimal solution $p^*\in\R[\x]_n$ reads:
\[
\x\mapsto p^*(\x):= \frac{\sum_{\balpha\in\N^d_n}P_{\balpha}(\bxi)P_{\balpha}(\x)}{\sum_{\balpha\in\N^d_n}P_{\balpha}(\bxi)^2}
= \frac{K^\mu_n(\bxi,\x)}{K^\mu_n(\bxi,\bxi)}\,,\quad\x\in\R^d.
\]
\begin{rem}
As $\Lambda^\mu_n$ and $K^\mu_n$ depend only on 
moments $\bmu=(\mu_{\balpha})_{\balpha\in\N^d_{2n}}$ of $\mu$, one may also define exactly in the same manner, the Christoffel function $\Lambda^{\bphi}_n$ and the CD-kernel $K^{\bphi}_n$ associated with a real (pseudo)-moment vector $\bphi\in\R^{s(2n)}$ such that $\M_n(\bphi)\succ0$.
\end{rem}
\subsection*{Equilibrium measure}

The notion of equilibrium measure associated with a given set originates from logarithmic potential theory 
(working with a compact set $E\subset\mathbb{C}$ in the univariate case). It minimizes the energy functional
\begin{equation}
\label{logarithmic}
I(\phi)\,:=\,\int\int \log{\frac{1}{\vert z-t\vert}}\,d\phi(z)\,d\phi(t)\,,
\end{equation}
over all Borel probability measures $\phi$ supported on $E$. For instance if $E$ is the interval $[-1,1]\subset\CC$ then the arcsine (or Chebyshev) distribution 
$\mu=dx/\pi\sqrt{1-x^2}$ is an optimal solution. It turns out that the integrand of 
\eqref{logarithmic} is also the Riesz  $s$-kernel 
\begin{equation}
\label{Riesz-s-kernel}
K_s(\x)\,:=\,\left\{\begin{array}{cl} \mathrm{sign}(s)\,\vert \x\vert^{-s}&
\mbox{if $-2< s<0$ or $s>0$}\\
-\log{\vert x\vert}&\mbox{if  $s=0$}\end{array}\right.\,,\quad \x\in\R^d \,,\:\x\neq0\,,
\end{equation}
for the couple $(d,s)=(1,2)$.

Some generalizations have been obtained in 
the multivariate case via pluripotential theory in $\mathbb{C}^d$. In particular, if $E\subset\R^d\subset\mathbb{C}^d$ 
is compact then its equilibrium measure (let us denote it by $\mu$) is equivalent to the Lebesgue measure 
on compact subsets of $\mathrm{int}(E)$. It has an even explicit expression if $E$ is convex and symmetric 
about the origin; see e.g. Bedford and Taylor \cite[Theorems 1.1 and 1.2]{bedford}. Several examples of sets $E$ 
with its equilibrium measure given in explicit form can be found in Baran \cite{Baran}. 
Importantly, the appropriate approach to define the (intrinsic) equilibrium measure $\mu$ of a compact subset of $\R^d$ with $d>1$, is to consider $\R^d$ as a subset of $\CC^d$ and invoke pluripotential theory with its tools from Complex analysis (in particular, plurisubharmonic functions (and their regularization) and Monge-Amp\`ere operator). 
In contrast to the one-dimensional  case $d=1$, in $\CC^d$ with $d>1$, there is no  kernel and so no ``energy" to minimize and the appropriate tool is the Monge-Amp\`ere differential operator.

However one may still define another notion of equilibrium measure now obtained by minimizing  over measures $\phi$ on $\R^d$ an integral functional $\hat{I}(\phi)$
similar to $I(\phi)$,  which  involves 
the Riesz $s$-kernel \eqref{Riesz-s-kernel} and an \emph{external field} $V$ (e.g. constant on the Euclidean ball $B(0,R)$ of $\R^d$ and infinite outside);
see e.g. \cite{Chafai,Dragnev,Saff-Totik}.
But of course the optimal solution $\phi^*$ now depends on the valid 
couples $(d,s)$ and working in $\R^d$ (and not in $\CC^d$),
the tools to obtain $\phi^*$  have nothing in common with pluripotential theory in $\CC^d$ to obtain $\mu$.
It is a coincidence\footnote{The authors want to thank N. Levenverg (in a private communication) for illuminating comments  on this point.} that 
for some values of $(d,s)$,  the equilibrium measure $\mu$ from pluripotential theory specialized to the Euclidean ball $E=B(0,R)\subset\R^d\,(\subset\CC^d)$ of radius $R$, coincides with the measure $\phi^*$ that minimizes $\hat{I}(\phi)$.
(Incidentally, the so-called Riesz potential associated to the Riesz $s$-kernel is also related to negative fractional powers of the Laplacian; see e.g.
\cite[p. 148--149]{Laplace}.)

For more details on equilibrium measures and pluripotential theory, the interested reader is referred to \cite{Baran,bedford,klimek}, the discussion in \cite[Section 6.8, p. 297]{Kirsch} as well as \cite[Appendix B]{Saff-Totik}, \cite{Levenberg-survey}, and the references therein.  In the sequel, when we speak about the equilibrium measure of  a compact subset $E\subset\R^d$, we refer to that in pluripotential theory (i.e., with $E$ considered as a subset of $\CC^d$).

Finally, it is also worth mentioning that Fekete points  (which provide very nice sets of points for polynomial interpolation) are strongly related to the equilibrium measure \cite[Theorem 4.5.1]{book}, and in particular  for the specific sets that we consider in this paper; see e.g.  \cite{Levenberg}.

\subsection*{Bernstein-Markov property}
A Borel measure $\phi$ supported 
on a compact set $\Omega \subset\R^d$ is said to satisfy the Bernstein-Markov property if there exists a sequence of 
positive numbers $(M_n)_{n\in\N}$ such that for all $n$ and $p\in\R[\x]_n$, 
\[\sup_{\x\in \Omega}\vert p(\x)\vert\,\leq\, M_n\cdot \left(\int_\Omega p^2\,d\phi\right)^{1/2}
\quad\mbox{and}\quad\lim_{n\to\infty}\log(M_n)/n\,=\,0;
\]
see e.g. \cite[Section 4.3.3]{book}. If a Borel measure $\phi$ on $\Omega$ has the Bernstein-Markov property,
then the sequence of measures $d\nu_n=\frac{d\phi(\x)}{s(n)\Lambda^\phi_n(\x)}$, $n\in\N$, converges to 
$\mu$ for the weak-$\star$ topology and, in particular,
\begin{equation}
 \label{weak-star}
 \lim_{n\to\infty}\int_\Omega\x^{\balpha}\,d\nu_n =\lim_{n \to\infty}\int_\Omega \frac{\x^{\balpha}\,d\phi(\x)}{s(t)\Lambda^\phi_n(\x)}
     = \int_\Omega \x^{\balpha}\,d\mu \,,\quad\forall \balpha\in\N^d\,
\end{equation}
(see e.g. \cite[Theorem 4.4.4]{book}). In addition, if a compact $\Omega \subset\R^d$ is regular
then $(\Omega,\mu)$ has the Bernstein-Markov property; see \cite[p. 59]{book}. 
%For a brief account of equilibrium measures, see \cite{Baran,bedford} and the discussion in \cite[Section 4-5, pp. 56--60]{book}, while more detailed expositions can be found in some of the references indicated there.

\section{Background and motivation}
\setcounter{equation}{0}
In this section, we discuss a duality result and its connection with the convex optimization problems, which motivates our
study. 

Let $g_0:=\1$ be the constant polynomial equal to $1$, and let $G\subset\R[\x]$ be a finite set of polynomials that contains $g_0$. 
For every $g\in \R[\x]$, let $t_g:=\lceil\mathrm{deg}(g)/2\rceil$ and let  $Q_n(g)$ be the convex cone associated with $G$, defined by:
\begin{eqnarray} \label{quad-truncated}
    Q_n(g)&:=&\bigg\{\,\sum_{g\in G}\sigma_g\,g\,:\: \sigma_g\in \Sigma[\x]_{n-t_g}\,\bigg\}\,,\quad n\in\N\,.
\end{eqnarray}
Clearly, every polynomial $p\in Q_n(G)$ is nonnegative on the set 
\begin{equation} \label{set-S}
    \Omega\,:=\,\{\,\x\in\R^d: g(\x)\geq0\,,\:\forall g\in G\,\}\,.
\end{equation}
This is the reason that one may say that the sum-of-squares weights $(\sigma^*_g)_{g\in G}$, in the representation $p=\sum_{g\in G}\sigma^*_g\,g$ of $p$, provide $p$ with an algebraic certificate of its positivity on $\Omega$. 
Such a representation of $p$, however,  is not unique in general and, consequently, neither is such a positivity certificate of $p$
unique.

The associated dual convex cone $Q_n(G)^*\subset\R[\x]_{2n}^*$ of $Q_n(G)$ is defined by
\begin{equation}  \label{quad-truncated2}
    Q_n(g)^*:= \left \{\,\bphi\in\R^{s(2n)}\,:\: \M_{n-t_g}(g\cdot\bphi)\,\succeq\,0\,,\quad g\in G\,\right\}.
\end{equation}
\begin{lem}[\cite{cras,nesterov}]\label{lem1}
  If $p\in\mathrm{int}(Q_n(G))$ then there is some $\bphi\in \mathrm{int}(Q_n(G)^*)$ such that
   \begin{eqnarray} \label{duality-1}
        p(\x)&=&\sum_{g\in G}\v_{n-t_g}(\x)^T\M_{n-t_g}(g\cdot\bphi)^{-1}\,\v_{n-t_g}(\x)\,g(\x)\,,\quad\forall \x\in\R^d\\
        \label{duality-2}
        &=&\sum_{g\in G}\Lambda^{g\cdot\bphi}_{n-t_g}(\x)^{-1}\,g(\x)\,,\quad\forall \x\in\R^d\,.
        \end{eqnarray}
\end{lem}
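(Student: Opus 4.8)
The plan is to realize \eqref{duality-1} as the first-order optimality condition of an explicit strictly convex program over the dual cone $Q_n(G)^*$, whose objective is a logarithmic barrier, and then to read off \eqref{duality-2} from the moment-matrix expression of the Christoffel function. This is the concrete form taken by Nesterov's interior-to-interior correspondence \cite{nesterov} in the present setting.

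First I would record the pairing that makes $Q_n(G)^*$ the dual of $Q_n(G)$. For $\bphi\in\R^{s(2n)}$ and $q\in\R[\x]_{2n}$ write $\la\bphi,q\ra:=\phi(q)$ for the Riesz functional of Section~\ref{sec:notation}. If $q=\sum_{g\in G}\sigma_g\,g\in Q_n(G)$ with $\sigma_g=\sum_k h_{g,k}^2$ and $\deg h_{g,k}\le n-t_g$, then $\phi(q)=\sum_{g}\sum_k (g\cdot\phi)(h_{g,k}^2)=\sum_g\sum_k\la \mathbf{h}_{g,k},\M_{n-t_g}(g\cdot\bphi)\,\mathbf{h}_{g,k}\ra$, which is nonnegative for all such $q$ precisely when $\M_{n-t_g}(g\cdot\bphi)\succeq0$ for every $g$; this confirms the description \eqref{quad-truncated2}. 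Since $g_0=\1\in G$, the cone $Q_n(G)$ is full-dimensional and the pairing is nondegenerate, so $Q_n(G)^*$ is pointed and $p\in\mathrm{int}(Q_n(G))$ is equivalent to $\la\bphi,\p\ra>0$ for every $\bphi\in Q_n(G)^*\setminus\{0\}$.

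Next I would introduce the barrier $F(\bphi):=-\sum_{g\in G}\log\det\M_{n-t_g}(g\cdot\bphi)$ on $\mathrm{int}(Q_n(G)^*)$ and study
$$\min_{\bphi\in\mathrm{int}(Q_n(G)^*)}\ \Phi(\bphi),\qquad \Phi(\bphi):=\la\bphi,\p\ra+F(\bphi).$$
Because $\1\in G$, the linear map $\bphi\mapsto\big(\M_{n-t_g}(g\cdot\bphi)\big)_{g}$ is injective (already $\M_n(\bphi)$ recovers all $\phi_\balpha$, $|\balpha|\le2n$), so $F$ is strictly convex, and $F\to+\infty$ at the boundary of the cone. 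Moreover $F$ is logarithmically homogeneous, $F(s\bphi)=F(\bphi)-\nu\log s$ with $\nu=\sum_{g}s(n-t_g)$; combined with $\la\bphi,\p\ra>0$ on $Q_n(G)^*\setminus\{0\}$ this makes $\Phi$ coercive along every ray and hence on $\mathrm{int}(Q_n(G)^*)$. Thus $\Phi$ attains a unique minimizer $\bphi^\ast$, which lies in the interior since the barrier excludes boundary minimizers. I expect this coercivity/properness step — the precise interplay between the logarithmic homogeneity of $F$ and the strict positivity of $\la\cdot,\p\ra$ furnished by $p\in\mathrm{int}(Q_n(G))$ — to be the main obstacle, and it is exactly the point at which Nesterov's correspondence is invoked.

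Finally I would compute $\nabla F$ and apply stationarity. Writing $M_g:=\M_{n-t_g}(g\cdot\bphi)$, one checks that $\partial_{\phi_\balpha}M_g(\bbeta,\bgamma)$ equals the coefficient of $\x^\balpha$ in $g\,\x^{\bbeta+\bgamma}$, so that from $\partial_{\phi_\balpha}\log\det M_g=\mathrm{tr}(M_g^{-1}\partial_{\phi_\balpha}M_g)$ and the symmetry of $M_g^{-1}$,
$$-\frac{\partial F}{\partial\phi_\balpha}(\bphi)=\sum_{g\in G}\Big[\text{coeff.\ of }\x^\balpha\text{ in } g(\x)\,\v_{n-t_g}(\x)^T M_g^{-1}\v_{n-t_g}(\x)\Big].$$
Hence $-\nabla F(\bphi)$ is the coefficient vector of $\sum_{g}\v_{n-t_g}^T\M_{n-t_g}(g\cdot\bphi)^{-1}\v_{n-t_g}\,g$. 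The optimality condition $\nabla\Phi(\bphi^\ast)=\p+\nabla F(\bphi^\ast)=0$ then reads $p=-\nabla F(\bphi^\ast)$ coefficientwise, which is precisely \eqref{duality-1} with $\bphi=\bphi^\ast\in\mathrm{int}(Q_n(G)^*)$. Identity \eqref{duality-2} follows immediately from the moment-matrix formula $\Lambda^{g\cdot\bphi}_{n-t_g}(\x)^{-1}=\v_{n-t_g}(\x)^T\M_{n-t_g}(g\cdot\bphi)^{-1}\v_{n-t_g}(\x)$ of Section~\ref{sec:notation} (extended to pseudo-moment vectors $\bphi$ with $\M_{n-t_g}(g\cdot\bphi)\succ0$ by the Remark).
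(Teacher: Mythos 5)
The paper itself gives no proof of Lemma~\ref{lem1}: it is imported from \cite{nesterov} and \cite{cras}, as the sentence following the lemma states. Your reconstruction is correct, and it is in fact the standard proof of Nesterov's theorem, fully consistent with the machinery the paper does display: after using the logarithmic homogeneity of the barrier to trade the linear term $\la\bphi,\p\ra$ for a normalization constraint, your unconstrained problem is exactly the log-det program \eqref{primal} that Theorem~\ref{th-duality-p=1} specializes to $p=\1$. The gradient computation is right --- $-\nabla F(\bphi)$ is the coefficient vector of $\sum_{g\in G} g\,\v_{n-t_g}^T\M_{n-t_g}(g\cdot\bphi)^{-1}\v_{n-t_g}$, so stationarity gives \eqref{duality-1} as an identity of polynomials of degree at most $2n$, and \eqref{duality-2} is then just the moment-matrix formula for the Christoffel function of a pseudo-moment vector. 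Two points deserve to be made explicit if this is to stand alone. First, the ``equivalence'' of $p\in\mathrm{int}(Q_n(G))$ with $\la\bphi,\p\ra>0$ on $Q_n(G)^*\setminus\{0\}$ requires $Q_n(G)$ closed in one direction; but only the unconditional forward implication is needed, and it yields $\la\bphi,\p\ra\geq c\Vert\bphi\Vert$ on $Q_n(G)^*$ by compactness of its unit sphere, which together with $F(\bphi)\geq -C-\nu\log\Vert\bphi\Vert$ gives compact sublevel sets --- so the coercivity step you flagged as the crux does go through. Second, the argument tacitly assumes that $\{\bphi:\M_{n-t_g}(g\cdot\bphi)\succ0,\ \forall g\in G\}$ is nonempty (equivalently, that $Q_n(G)$ contains no line); this holds under the paper's standing hypothesis that $\Omega$ has nonempty interior and is in any case implicit in the lemma's conclusion, but it is an honest hypothesis of the barrier argument and should be recorded.
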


Eq. \eqref{duality-1} is due to Nesterov \cite{nesterov} while \eqref{duality-2} is its interpretation in terms of 
Christoffel functions of appropriate linear functionals; see \cite{cras} for more details. Notice that 
\eqref{duality-1} provides $p$ with a \emph{distinguished} certificate of its positivity on the set
$\{\x\in\R^d: g(\x)\geq0\,,\:\forall g\in G\}$, stated in terms of a specific linear functional $\phi^*_{2n}\in\mathrm{int}(Q_n(G)^*)$.
An immediately question arises: \emph{What is the relation between $\phi^*_{2n}$ and $p$?} We will provide an answer 
for the case when $p=\1$ for certain sets $\Omega\subset\R^d$ with specific geometry.

Next, we consider the following two convex optimization problems. For every $n\in\N$ and 
a fixed subset $G_n\subseteq G$, the 
first problem states 
\begin{align} \label{primal}
\rho_n\,= \displaystyle{\inf_{\bphi\in\R^{s(2n)}}}  \bigg\{-\displaystyle{\sum_{g\in G_n}}& \log\mathrm{det}(\M_{n-t_g}(g\cdot \bphi)): \\
     & \quad \phi(1)\,=\,1, \,  \M_{n-t_g}(g\cdot\bphi)\,\succeq\,0\,,\:\forall g\in G_n\,\bigg\}\,, \notag
\end{align}
and the second problem states 
\begin{align} \label{dual}
 \rho^*_n\,= \displaystyle{ \sup_{\Q_g}} &\bigg \{\displaystyle\sum_{g\in G_n} \log\mathrm{det}(\Q_g)\,: \Q_g\succeq0\,,\forall g\in G_n, \\
 & \displaystyle{\sum_{g\in G_n}} s(n-t_g)= 
 \sum_{g\in G_n}g(\x)\cdot\v_{n-t_g}(\x)^T\Q_g\v_{n-t_g}(\x), \forall\x\in\R^d\,\bigg\} \notag
\end{align}
where the supremum is taken over real symmetric matrices $(\Q_g)_{g\in G_n}$ of respective sizes $s(n-t_g)$. (In Section 4 we describe how to choose $G_n\subset G$, $n\in\N$,  to obtain our main result.)
The two optimization problems \eqref{primal} and \eqref{dual}  are closely related. 

\begin{thm} \label{th-duality-p=1}
With $n\in\N$ and $G_n\subseteq G$ fixed, Problems \eqref{primal} and \eqref{dual} have the same finite optimal value
$\rho_n=\rho_n^*$ if and only if $\1\in \mathrm{int}(Q_n(G_n))$. Moreover, both have a unique optimal solution
 $\bphi^*_{2n}\in\R^{s(2n)}$ and $(\Q^*_g)_{g\in G_n}$ respectively, which satisfy 
$\Q_g^*=\M_{n-t_g}(g\cdot\bphi^*_{2n})^{-1}$ for all $g\in G_n$. And, as a consequence, 
\begin{align}  \label{eq:th1-1}
 1\, & =\frac{1}{ \displaystyle\sum_{g\in G_n} s(n-t_g)}\,
 \sum_{g\in G_n}g(\x)\,\v_{n-t_g}(\x)^T\M_{n-t_g}(g\cdot\bphi^*_{2n})^{-1}\v_{n-t_g}(\x)\\
  & = \frac{1}{ \displaystyle\sum_{g\in G_n} s(n-t_g)}\, \sum_{g\in G_n}g(\x)\,\Lambda^{g\cdot\bphi^*_{2n}}_{n-t_g}(\x)^{-1}\,,\quad\forall  \x\in\R^d. \notag
\end{align}
\end{thm}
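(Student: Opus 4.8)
The plan is to treat \eqref{primal} and \eqref{dual} as a conjugate pair of matrix $\log\mathrm{det}$ programs linked by one bilinear identity, to obtain weak duality and the equality condition by an elementary eigenvalue inequality, and then to read off \eqref{eq:th1-1} directly from the first-order optimality conditions of \eqref{primal}. First I would record the pairing
\[
 \phi\big(g\cdot\v_{n-t_g}^T\Q_g\,\v_{n-t_g}\big)=\mathrm{tr}\big(\Q_g\,\M_{n-t_g}(g\cdot\bphi)\big),
\]
valid for every symmetric matrix $\Q_g$ and every $\bphi\in\R^{s(2n)}$; it follows by expanding $\v_{n-t_g}^T\Q_g\v_{n-t_g}$ in the monomial basis and invoking $(g\cdot\phi)(\x^{\balpha+\bbeta})=\phi(g\,\x^{\balpha+\bbeta})$. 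Applying a primal-feasible $\phi$ (so $\phi(\1)=1$) to the dual equality constraint of \eqref{dual} then yields the \emph{coupling}
\[
 \sum_{g\in G_n}\mathrm{tr}\big(\Q_g\,\M_{n-t_g}(g\cdot\bphi)\big)=\sum_{g\in G_n}s(n-t_g)
\]
for any feasible pair. Feeding $C=\Q_g\,\M_{n-t_g}(g\cdot\bphi)$ into the scalar bound $\log\mathrm{det}(C)\le\mathrm{tr}(C)-\dim C$ (equivalently $\sum_i\log\lambda_i\le\sum_i(\lambda_i-1)$, with equality iff $C=I$) and summing gives weak duality $\rho_n^*\le\rho_n$, together with the sharp equality criterion: the two values agree exactly when $\Q_g=\M_{n-t_g}(g\cdot\bphi)^{-1}$ for every $g\in G_n$.

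Next I would settle finiteness, attainment, and the stated equivalence through the conic dictionary. By \eqref{quad-truncated2} the primal feasible set of \eqref{primal} is precisely the base $\{\bphi\in Q_n(G_n)^*:\phi(\1)=1\}$, and the standard fact that $\1\in\mathrm{int}(Q_n(G_n))$ is equivalent to compactness of this base is the pivot of the whole equivalence. When $\1\in\mathrm{int}(Q_n(G_n))$, the objective $-\sum_g\log\mathrm{det}\,\M_{n-t_g}(g\cdot\bphi)$ is lower semicontinuous on this compact set and equals $+\infty$ on its boundary, so it attains its minimum at an interior point $\bphi^*_{2n}$ at which every localizing matrix is strictly positive definite; this furnishes a finite, attained $\rho_n$. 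Conversely, if $\1\notin\mathrm{int}(Q_n(G_n))$ the base is noncompact, and along a recession direction the $\log\mathrm{det}$ barrier is unbounded below, forcing $\rho_n=-\infty$; hence a finite common value is possible only in the interior case. On the dual side, interiority of $\1$ also produces a strictly positive-definite feasible $(\Q_g)$ (via Lemma~\ref{lem1}), so both programs are Slater-regular.

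The heart of the argument is to extract \eqref{eq:th1-1} from stationarity at $\bphi^*_{2n}$. Because $\bphi\mapsto\M_{n-t_g}(g\cdot\bphi)$ is linear, the directional derivative of the objective in a direction $\boldsymbol{\psi}$ (with Riesz functional $\psi$) equals $-\sum_g\mathrm{tr}\big(\M_{n-t_g}(g\cdot\bphi^*)^{-1}\M_{n-t_g}(g\cdot\boldsymbol{\psi})\big)=-\psi\big(\sum_g g\cdot\v_{n-t_g}^T\Q_g^*\v_{n-t_g}\big)$, where $\Q_g^*:=\M_{n-t_g}(g\cdot\bphi^*_{2n})^{-1}$ and I have reused the pairing above. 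Stationarity of \eqref{primal} under the single active constraint $\phi(\1)=1$ forces this linear functional of $\boldsymbol{\psi}$ to be proportional to $\psi(\1)$; since $\boldsymbol{\psi}$ ranges over all of $\R^{s(2n)}$ and the polynomial $\sum_g g\,\v_{n-t_g}^T\Q_g^*\v_{n-t_g}$ has degree at most $2n$, this means that polynomial is a constant $\lambda$. Evaluating $\phi^*_{2n}$ on both sides and using the coupling identity with $\mathrm{tr}\,I_{s(n-t_g)}=s(n-t_g)$ pins down $\lambda=\sum_g s(n-t_g)$, which is exactly \eqref{eq:th1-1}. In particular $(\Q_g^*)$ is dual-feasible, and since $\Q_g^*\M_{n-t_g}(g\cdot\bphi^*)=I$ the equality criterion of the first paragraph makes it dual-optimal, so $\rho_n=\rho_n^*$ with $\Q_g^*=\M_{n-t_g}(g\cdot\bphi^*_{2n})^{-1}$. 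Uniqueness is then immediate: $-\log\mathrm{det}$ is strictly convex on the positive-definite cone and $\bphi\mapsto(\M_{n-t_g}(g\cdot\bphi))_{g\in G_n}$ is injective (for instance because $g_0=\1\in G_n$ lets $\M_n(\bphi)$ recover all moments of $\bphi$), so $\bphi^*_{2n}$ is the unique primal minimizer and the relation $\Q_g^*=\M_{n-t_g}(g\cdot\bphi^*)^{-1}$ then singles out the unique dual maximizer.

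The step I expect to be the main obstacle is not the formula \eqref{eq:th1-1}, which drops out mechanically once the pairing and stationarity are set up, but the clean equivalence ``$\rho_n=\rho_n^*$ finite $\iff\1\in\mathrm{int}(Q_n(G_n))$''. Establishing it rigorously requires both the compact-base characterization of interiority and an honest verification that along an unbounded recession direction of the base the $\log\mathrm{det}$ objective really does diverge to $-\infty$ (here one uses that the normalization $\phi(\1)=1$ is preserved while some localizing matrix grows, so that at least one $\log\mathrm{det}$ term tends to $+\infty$). A secondary point I would be careful to state explicitly rather than assume is the injectivity of $\bphi\mapsto(\M_{n-t_g}(g\cdot\bphi))_{g\in G_n}$ underlying strict convexity and hence uniqueness.
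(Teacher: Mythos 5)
The paper itself gives no proof of Theorem~\ref{th-duality-p=1}: it is imported as background from \cite{cras} and \cite{nesterov}, so there is no in-text argument to compare yours against. That said, your proof is the standard $\log\det$ conic-duality mechanism that underlies Nesterov's result, and it is correct: the trace pairing $\phi(g\cdot\v_{n-t_g}^T\Q_g\v_{n-t_g})=\mathrm{tr}(\Q_g\M_{n-t_g}(g\cdot\bphi))$, weak duality via $\log\det C\le\mathrm{tr}(C)-\dim C$ with equality iff $C=I$, and the stationarity computation that forces $\sum_g g\,\v_{n-t_g}^T\Q_g^*\v_{n-t_g}$ to be the constant $\sum_g s(n-t_g)$ together deliver \eqref{eq:th1-1} exactly as stated. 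The only points to make explicit as hypotheses rather than asides are the ones you already flag: you need $\1\in G_n$ (or some equivalent condition) both for the injectivity of $\bphi\mapsto(\M_{n-t_g}(g\cdot\bphi))_{g\in G_n}$ that gives strict convexity and uniqueness, and to rule out a nonzero recession direction $\boldsymbol{\psi}$ with all localizing matrices $\M_{n-t_g}(g\cdot\boldsymbol{\psi})=0$ in the divergence argument; and the ``finite value $\Rightarrow$ interiority'' direction also tacitly uses that the primal has a strictly feasible point (all localizing matrices positive definite), which holds in the paper's setting because $\Omega$ has nonempty interior. With those caveats recorded, the argument is complete and matches the route of the cited sources.
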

Once again, one may interpret \eqref{eq:th1-1} as providing the constant polynomial $\1$ with a specific 
algebraic certificate of its positivity on the set $\Omega:=\{\,\x\in\R^d:\: g(\x)\,\geq\,0\,,\:\forall g\in G_n\,\}$. 
It also provides $\Omega$ with a polynomial \emph{partition of unity} in terms of its generators $g\in G_n$. Finally, 
it is worth noticing that both \eqref{primal} and \eqref{dual} are specific convex optimization problems with a ``$\log \mathrm{det}$" 
criterion, which can be solved with off-the-shelf mathematical softwares such as CVX \cite{cvx} or 
JuMP \cite{JuMP}.

An interesting and very specific case is when the (unique) optimal solution $\bphi^*_{2n}$ in Theorem \ref{th-duality-p=1}, 
for some $n_0\in\N$ and every $n\geq n_0$, is the vector of moments up to degree $2n$ of a same unique measure $\mu$ 
on a set $\Omega:=\{\x\in\R^d: g(\x)\geq 0\,,\:\forall g\in G\}$. In fact, $\Omega$ can be defined by a smaller set 
$\tilde{G}$ of generators while $G$ is made of some products of generators in $\tilde{G}$. This is because 
\eqref{eq:th1-1} may not hold with $\tilde{G}$ while it sometimes holds with a subset $G_n\subseteq G$;
however, for sufficiently large $n$, $G_n=G$. It then turns out 
that $\mu$ is necessarily the equilibrium measure of $\Omega$. 

\begin{ass}
\label{ass-1}
The set $\Omega$ in \eqref{set-S} is compact with nonempty interior. Moreover,
there exists $R>0$ such that the 
quadratic polynomial $\x\mapsto \theta(\x):=R-\Vert\x\Vert^2$ is an element of $Q_1(G)$.
In other words, $h\in Q_1(G)$ is an ``algebraic certificate" that $\Omega$ is compact.
\end{ass}

\begin{thm}
\label{th0}
With $\Omega$  as in \eqref{set-S}, let Assumption \ref{ass-1} hold. 
Let $\bphi=(\phi_{\balpha})_{\balpha\in\N^n}$ (with $\phi_0=1$) be such that $\M_n(g\cdot\bphi)\succ0$ for all $n\in\N$ and all $g\in G$, so that the Christoffel functions $\Lambda^{g\cdot\phi}_n$ are all well defined
(recall that $\phi\in\R[\x]^*$ is the Riesz  linear functional associated with the moment sequence
$\bphi$). In addition, suppose that there exists $n_0\in\N$ such that 
 \begin{equation}
 \label{th0-1}
 1\,=\,
 \frac{1}{\sum_{g\in G_n}s(n-t_g)}\,\sum_{g\in G_n} g\cdot(\Lambda^{g\cdot\phi}_{n-t_g})^{-1}\,,\quad\forall n\geq n_0\,,
 \end{equation}
 for some subset $G_n\subseteq G$ for all $n\geq n_0$ (with $G_n=G$ for $n$ sufficiently large). 
 Then $\phi$ is a Borel measure on $\Omega$ and it is the unique representing measure of $\bphi$. Moreover, if 
 $(\Omega,g\cdot\phi)$ satisfies the Bernstein-Markov property for every $g\in G$, then 
 $\phi=\mu$ (the equilibrium measure of $\Omega$) and  therefore the Christoffel polynomials $(\Lambda^{g\cdot\mu}_{n})^{-1}_{g\in G_n}$ satisfy the generalized Pell's equations 
\begin{equation}
\label{th0-2}
1\,=\,\frac{1}{\sum_{g\in G_n}s(n-t_g)}\,\sum_{g\in G_n} g\cdot(\Lambda^{g\cdot\mu}_{n-t_g})^{-1}\,,\quad\forall n\geq n_0\,.
\end{equation}
\end{thm}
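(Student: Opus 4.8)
The plan is to prove the two assertions in turn: first that $\bphi$ has a unique representing Borel measure on $\Omega$, and then, under the extra Bernstein--Markov hypothesis, that this measure is the equilibrium measure $\mu$. I would emphasize at the outset that hypothesis \eqref{th0-1} is used only for the second assertion; the first follows from the positivity of the localizing matrices together with Assumption~\ref{ass-1}.

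For existence and uniqueness, note that for $p\in\R[\x]_m$ with coefficient vector $\p$ one has $\langle\p,\M_m(g\cdot\bphi)\,\p\rangle=\phi(g\,p^2)$. Hence $\M_m(g\cdot\bphi)\succ0$ for all $m$ and all $g\in G$ gives $\phi(g\,p^2)\ge0$ for every $p$ and every $g\in G$, so that $\phi\ge0$ on the quadratic module $Q(G)=\bigcup_n Q_n(G)$. Assumption~\ref{ass-1} asserts $\theta=R-\|\x\|^2\in Q_1(G)\subseteq Q(G)$, i.e. $Q(G)$ is Archimedean. By the dual (moment) form of Putinar's Positivstellensatz \cite{putinar}, a functional that is nonnegative on an Archimedean quadratic module and normalized by $\phi(\1)=\phi_0=1$ is represented by a probability measure supported on $\Omega$; moreover the Archimedean inequality $\phi\bigl(p^2(R-\|\x\|^2)\bigr)\ge0$ confines the support of \emph{any} representing measure to a fixed ball, so by density of polynomials in $\mathscr{C}(\Omega)$ (Stone--Weierstrass) the representing measure is unique. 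This identifies $\phi$ with the unique representing probability measure of $\bphi$ on $\Omega$.

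For the identification $\phi=\mu$, I would first observe that, $\phi$ now being a genuine positive measure and $g\ge0$ on $\Omega$ for each $g\in G$, every $g\cdot\phi=g\,\d\phi$ is a positive Borel measure on $\Omega$ whose genuine Christoffel function agrees with the pseudo-moment object $\Lambda^{g\cdot\phi}_{n-t_g}$. The key step is to read \eqref{th0-1} as an identity of measures: setting $S_n:=\sum_{g\in G_n}s(n-t_g)$ and multiplying \eqref{th0-1} by $\d\phi$ gives, for every $n\ge n_0$,
\[
\d\phi=\sum_{g\in G_n}\frac{s(n-t_g)}{S_n}\,\d\nu_{g,n},\qquad
\d\nu_{g,n}:=\frac{g(\x)\,\Lambda^{g\cdot\phi}_{n-t_g}(\x)^{-1}}{s(n-t_g)}\,\d\phi(\x)=\frac{\d(g\cdot\phi)}{s(n-t_g)\,\Lambda^{g\cdot\phi}_{n-t_g}(\x)}.
\]
Thus, for all large $n$ (where $G_n=G$), the fixed measure $\phi$ is a convex combination of the measures $\nu_{g,n}$, with weights $w_{g,n}=s(n-t_g)/S_n\in[0,1]$ summing to $1$. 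Each $\nu_{g,n}$ is exactly the measure in the Bernstein--Markov consequence \eqref{weak-star} applied to $g\cdot\phi$, so the hypothesis that $(\Omega,g\cdot\phi)$ has the Bernstein--Markov property yields $\nu_{g,n}\to\mu$ weak-$\star$, with the \emph{same} equilibrium measure $\mu$ of $\Omega$ for every $g$, since the equilibrium measure depends only on $\Omega$. A convex combination of finitely many sequences each converging weak-$\star$ to $\mu$ again converges to $\mu$ (for $f\in\mathscr{C}(\Omega)$, $|\sum_g w_{g,n}\int f\,\d\nu_{g,n}-\int f\,\d\mu|\le\max_g|\int f\,\d\nu_{g,n}-\int f\,\d\mu|\to0$), independently of the weights. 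As the left-hand side is the constant measure $\phi$, its weak-$\star$ limit is $\phi$, whence $\phi=\mu$; substituting back into \eqref{th0-1} gives \eqref{th0-2}.

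The main obstacle is the second assertion and, within it, the clean identification of the weak-$\star$ limit. Two points demand care: (i) one must know $\phi$ is a genuine positive measure before invoking the Bernstein--Markov property of $g\cdot\phi$, which is precisely why the existence step is carried out first; and (ii) one must recognize that all the normalized measures $\nu_{g,n}$ share the single weak-$\star$ limit $\mu$, the equilibrium measure of $\Omega$. The varying sets $G_n$ are harmless, since $G_n=G$ for $n$ large and only the limit $n\to\infty$ matters; and the weight asymptotics $s(n-t_g)/S_n\to 1/|G|$ need not be invoked, the convex-combination argument using only $w_{g,n}\ge0$ and $\sum_g w_{g,n}=1$.
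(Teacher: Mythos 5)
Your proof is correct and follows precisely the route the paper sets up for this result (which it states without printing a proof, deferring to \cite{cras}): positive definiteness of the localizing matrices plus the Archimedean condition of Assumption~\ref{ass-1} gives a unique representing measure on $\Omega$ via Putinar's theorem \cite{putinar}, and the Bernstein--Markov consequence \eqref{weak-star} applied to each $g\cdot\phi$, combined with your convex-combination reading of \eqref{th0-1}, forces $\phi=\mu$. I see no gaps; your observation that \eqref{th0-1} is not needed for the first assertion is also accurate.
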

The prototype example of Theorem \ref{th0} is the interval $\Omega:=[-1,1]$ with Chebyshev (equilibrium) measure
$\mu=dx/\pi\sqrt{1-x^2}$. Our main result in this paper is to identify three cases of sets $\Omega\subset\R^d$ for which 
this is precisely the case for \emph{all} $n$ and in \emph{any} dimension $d$, as suggested in \cite{cras} where they are verified only for 
$d = 2$ and $n = 1, 2, 3$. 

\begin{rem} \label{rem:3.1}
The equilibrium measure is a probability measure. In the identity  \eqref{th0-1}, however, the measure $g\cdot \mu$ is not 
normalized as a probability measure. This is important since the value of the Christoeffel function depends on the normalization. 
Indeed, if $\d \mu$ is a measure and $\gamma$ is a positive constant, then it is easy to verify that 
$$
   \Lambda_n^{\gamma \cdot \mu} (\x) = \gamma \Lambda_n^{\mu}(\x)\,.
$$
\end{rem}

\section{Main result}
\label{sec:main}
\setcounter{equation}{0}
In this section, we state our main results on \eqref{intro-0B} for several compact sets $\Omega$ with associated equilibrium 
measure denoted $\mu$. That is, we identify three cases of sets $\Omega\subset\R^d$  for which Theorem \ref{th0} holds 
in any dimension $d$. Since the properties of the orthogonal structure remain valid under the
affine transformation, we shall state our results only for regular domains. More precisely, we consider
\begin{description}
\item[Unit ball] $\Omega= \BB^d = \{\x\in\R^d: 1-\Vert\x\Vert^2\geq 0\}$ and $\mu$ is proportional to
 \begin{equation} \label{equi-ball}%d\mu(\x)\,=\,
    \frac{\d\x}{\sqrt{1-\Vert\x\Vert^2}}.
 \end{equation}
\item[Simplex]$\Omega=\triangle^d =\{\x\in\R^d_+: 1-\sum_{j=1}^d x_j\geq0\,\}$ and $\mu$ is proportional to
 \begin{equation} \label{equi-simplex}%d\mu(\x)\,=\,
       \frac{\d\x}{\sqrt{x_1}\cdots\sqrt{x_d}\sqrt{1-\sum_{j=1}^d x_j}}\,.
 \end{equation}
 \item[Unit cube] $\Omega=\square^d:=[-1,1]^d$ and $\mu$ is proportional to
 \begin{equation} \label{equi-box}
        \frac{\d\x}{\sqrt{1-x_1^2}\cdots\sqrt{1-x_d^2}}\,.
 \end{equation}
\end{description}
For more details (and even more examples of sets) the interested reader is referred to \cite{Baran}. Below we state and discuss our main result for each case in a subsection. The proof is postponed to Section \ref{proofs}.
\subsection{On the unit ball}
Let $\Omega =\BB^d$ with associated equilibrium measure $\mu$ as in \eqref{equi-ball}, 
and let $\x\mapsto g(\x):=1-\Vert\x\Vert^2$, for all $\x\in\R^d$.
\begin{thm} \label{th-ball}
Let $\mu$ be the equilibrium measure of $\BB^d$, normalized to be a probability measure. Then, for every $m\in\N$,
\begin{equation}\label{th-ball-1}
    P^\mu_m(\x,\x)+ g(\x)\,P^{g\cdot\mu}_{m-1}(\x,\x)= \binom{d+m-1}{d-1} + \binom{d+m-2}{d-1},
    \quad\forall \x\in\R^d.
\end{equation}
And as a consequence, for every $n\in\N$:
\begin{equation} \label{th-ball-2}
    \Lambda^\mu_n(\x)^{-1}+ g(\x)\,\Lambda^{g\cdot\mu}_{n-1}(\x)^{-1}\,=\binom{d+n}{d} + \binom{d+n-1}{d},\quad\forall\x\in\R^d\,.
\end{equation}
\end{thm}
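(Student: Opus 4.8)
The plan is to reduce both identities to the classical closed-form (addition-formula) expression for the reproducing kernels of orthogonal polynomials on $\BB^d$, exploiting the fact that the two measures in \eqref{th-ball-1} are ball weights with consecutive parameters. Writing $W_\gamma(\x)\propto(1-\Vert\x\Vert^2)^{\gamma-1/2}$, the equilibrium measure $\mu$ is, after normalization, $W_0$, while $g\cdot\mu$ is a constant multiple of $W_1$, since $g(\x)(1-\Vert\x\Vert^2)^{-1/2}=(1-\Vert\x\Vert^2)^{1/2}$. First I would record the precise constant: with $W_\gamma:=\int_{\BB^d}(1-\Vert\x\Vert^2)^{\gamma-1/2}\,\d\x$ a beta-integral computation gives $W_0/W_1=d+1$; by Remark~\ref{rem:3.1} this is exactly the scaling factor relating $P^{g\cdot\mu}_{m-1}$ to the reproducing kernel of the probability-normalized weight $W_1$.

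Next I would invoke the addition formula for ball polynomials (the analogue of \eqref{intro:addtion}), which expresses the degree-$m$ reproducing kernel of $W_\gamma$ as a one-dimensional integral of a Gegenbauer polynomial $C_m^\lambda$ with $\lambda=\gamma+\tfrac{d-1}{2}$, taken against the measure $(1-t^2)^{\gamma-1}\,\d t$; on the diagonal $\x=\y$ its argument collapses to $\Vert\x\Vert^2+t(1-\Vert\x\Vert^2)$. For $\gamma=0$ the weight $(1-t^2)^{\gamma-1}$ degenerates to point masses at $t=\pm1$ (this is the promised reduction to a sum rather than a single term, equivalently the restriction to $\BB^d$ of a zonal harmonic on $\SS^d$), giving
\begin{equation*}
P^\mu_m(\x,\x)=\frac{m+\lambda}{2\lambda}\left[C_m^\lambda(1)+C_m^\lambda(2\Vert\x\Vert^2-1)\right],\qquad\lambda=\tfrac{d-1}{2}.
\end{equation*}
For $\gamma=1$ the weight is flat, and after the substitution $s=\Vert\x\Vert^2+t(1-\Vert\x\Vert^2)$ the relevant integral reduces to $\tfrac{1}{1-\Vert\x\Vert^2}\int_{2\Vert\x\Vert^2-1}^{1}C_{m-1}^{\lambda+1}(s)\,\d s$.

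The key computation is then to integrate using the Gegenbauer derivative identity $\frac{\d}{\d s}C_m^\lambda(s)=2\lambda\,C_{m-1}^{\lambda+1}(s)$, which turns the last integral into $C_m^\lambda(1)-C_m^\lambda(2\Vert\x\Vert^2-1)$ and produces, after inserting the prefactor and the normalization factor $W_0/W_1=d+1=2(\lambda+1)$,
\begin{equation*}
g(\x)\,P^{g\cdot\mu}_{m-1}(\x,\x)=\frac{m+\lambda}{2\lambda}\left[C_m^\lambda(1)-C_m^\lambda(2\Vert\x\Vert^2-1)\right].
\end{equation*}
Adding the two displays, the $\x$-dependent terms $C_m^\lambda(2\Vert\x\Vert^2-1)$ cancel exactly — this cancellation, which hinges on the constant $W_0/W_1$ being precisely $2(\lambda+1)$, is the crux of the argument — leaving $\frac{m+\lambda}{\lambda}C_m^\lambda(1)$. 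Since $C_m^\lambda(1)=\binom{m+2\lambda-1}{m}=\binom{m+d-2}{d-2}$ and $2\lambda=d-1$, a short binomial manipulation shows $\frac{m+\lambda}{\lambda}C_m^\lambda(1)=\binom{d+m-1}{d-1}+\binom{d+m-2}{d-1}$, which is \eqref{th-ball-1}.

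Finally, \eqref{th-ball-2} follows by summing \eqref{th-ball-1} over $m=0,1,\ldots,n$. On the left, $\sum_{m=0}^n P^\mu_m(\x,\x)=K^\mu_n(\x,\x)=\Lambda^\mu_n(\x)^{-1}$ by \eqref{eq:K-kernel}–\eqref{ortho-poly-0}, while $\sum_{m=0}^n P^{g\cdot\mu}_{m-1}(\x,\x)=K^{g\cdot\mu}_{n-1}(\x,\x)=\Lambda^{g\cdot\mu}_{n-1}(\x)^{-1}$ (with the convention $P^{g\cdot\mu}_{-1}=0$). On the right, the hockey-stick identity gives $\sum_{m=0}^n\binom{d+m-1}{d-1}=\binom{d+n}{d}$ and $\sum_{m=0}^n\binom{d+m-2}{d-1}=\binom{d+n-1}{d}$, yielding the stated constant. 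The main obstacle is purely the first, analytic step — locating and correctly normalizing the closed-form addition formula, in particular handling the degenerate $\gamma=0$ case and pinning down the constant $W_0/W_1$ so that the cancellation is exact; once that is in place, the remainder is bookkeeping with Gegenbauer and binomial identities.
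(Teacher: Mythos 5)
Your argument is correct, but it takes a genuinely different route from the paper's. The paper never touches the one-dimensional-integral form of the addition formula on the ball: it lifts everything to the sphere $\SS^d$, decomposes an orthonormal basis of $\CH_n^{d+1}$ into the two families $P_\nu^n(W_{-1/2},\x)$ and $x_{d+1}P_\nu^{n-1}(W_{1/2},\x)$ (via \cite[Theorem 4.2.4]{DX} and the integral identity relating $\SS^d$ to $\BB^d$), so that \eqref{th-ball-1} drops out of the classical addition formula \eqref{intro:addtion} evaluated on the diagonal; the normalization issue you isolate as the crux is handled there by choosing $W_{1/2}=c_{-1/2}(1-\Vert\x\Vert^2)^{1/2}$, i.e.\ by computing $\Vert x_{d+1}P_\nu^{n-1}(W_{1/2})\Vert_{L^2(\d\sigma)}$ directly. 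You instead stay on the ball and apply Xu's closed-form kernel formula to each weight separately: the degenerate $\gamma=0$ case gives the two-point-mass expression for $P_m^\mu(\x,\x)$ (which is equivalent to evaluating the paper's \eqref{eq:additionBall} at $Y=(\x,\pm\sqrt{1-\Vert\x\Vert^2})$ and averaging), and the $\gamma=1$ case is integrated via $\frac{\d}{\d s}C_m^\lambda=2\lambda C_{m-1}^{\lambda+1}$; I checked the constants ($c_1=\tfrac12$, the prefactor $\frac{m+\lambda}{\lambda+1}$, and $W_0/W_1=d+1=2(\lambda+1)$) and the cancellation is exact, and the final binomial and hockey-stick manipulations are right. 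What each approach buys: yours makes completely transparent \emph{why} $g\cdot\mu$ must carry the non-probability normalization (the factor $d+1$ is forced by the cancellation), a point the paper relegates to a remark; the paper's spherical-lift argument avoids the explicit Gegenbauer integral and, more importantly, is the template that generalizes to the simplex in the next subsection, whereas your kernel computation is specific to the ball. Two small cautions: you should cite the closed-form addition formula (and its $\gamma\to0$ limit) precisely, since it is the one nontrivial external input, and note that at $d=1$ the expression $\frac{m+\lambda}{2\lambda}C_m^\lambda$ must be read as the limit $\lambda\to0$.
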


As noted in Remark \ref{rem:3.1}, the weight function $g \cdot \mu$ is not normalized.
 
We note that the identity \eqref{th-ball-1} is the exact multivariate analog for the unit ball $\BB^d$ of the 
univariate polynomial Pell's equation \eqref{intro-0} for $[-1,1]$. It is now a property of all polynomials of degree $m$ 
orthonormal with respect to $\mu$ and all polynomials of degree $m-1$ orthonormal with respect to $g\cdot\mu$. Moreover,
the identity \eqref{th-ball-2} is a generalized Pell's equation satisfied by the Christoffel functions $\Lambda^\mu_n$ and 
$\Lambda^{g\cdot\mu}_n$. In other words, we have proved that in any dimension $d$,
the unit ball is an instance of a set $\Omega\subset\R^d$ for which
Theorem \ref{th0} holds. In this case $G_n=G=\{g\}$ for all $n$, with $\x\mapsto g(\x)=1-\Vert\x\Vert^2$.

\begin{cor}\label{cor-ball}
Let $\mu$ be the equilibrium measure of $\BB^d$ normalized to be a probability measure. Then, for every $n$,
\begin{equation} \label{cor-box-1}
\inf_{\x\in \BB^d} \Lambda^\mu_n(\x)\,=\,\min_{\x\in \BB^d} \Lambda^\mu_n(\x)\,=\,
\left[\binom{d+n}{d} + \binom{d+n-1}{d}\right]^{-1}\,=:\,\frac{1}{\gamma_n},
\end{equation}
and the minimum is attained at all points of the boundary $\sph$ of $\BB^d$. Moreover
\[\BB^d\,\subset\,\{\x: \Lambda^\mu_n(\x)\,\geq\,1/\gamma_n\,\},\quad \forall n\in\N\,.\]
\end{cor}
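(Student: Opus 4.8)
The plan is to read the corollary off directly from the generalized Pell identity \eqref{th-ball-2}, exploiting the sign of each term on $\BB^d$. Writing $\gamma_n := \binom{d+n}{d} + \binom{d+n-1}{d}$ for the constant on the right-hand side of \eqref{th-ball-2} and $g(\x)=1-\Vert\x\Vert^2$, the identity rearranges to
\[
\Lambda^\mu_n(\x)^{-1} = \gamma_n - g(\x)\,\Lambda^{g\cdot\mu}_{n-1}(\x)^{-1}, \qquad \x\in\R^d.
\]
The first observation is that the subtracted term is nonnegative on $\BB^d$: by \eqref{ortho-poly-0} the factor $\Lambda^{g\cdot\mu}_{n-1}(\x)^{-1} = K^{g\cdot\mu}_{n-1}(\x,\x) = \sum_{|\balpha|\le n-1} P_{\balpha}(\x)^2$ is a sum of squares, hence $\ge 0$ for every $\x$ (note that $g\cdot\mu$ is a genuine nonnegative measure on $\BB^d$ since $g\ge 0$ there), while $g(\x)\ge 0$ precisely on $\BB^d$.

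From this I would conclude $\Lambda^\mu_n(\x)^{-1}\le\gamma_n$, equivalently $\Lambda^\mu_n(\x)\ge 1/\gamma_n$, for all $\x\in\BB^d$. This is exactly the claimed inclusion $\BB^d\subset\{\x:\Lambda^\mu_n(\x)\ge 1/\gamma_n\}$, and it gives the lower bound $\inf_{\x\in\BB^d}\Lambda^\mu_n(\x)\ge 1/\gamma_n$. To see the bound is attained, I restrict to the boundary $\sph=\{\x:\Vert\x\Vert=1\}$, where $g\equiv 0$, so that the displayed identity collapses to $\Lambda^\mu_n(\x)^{-1}=\gamma_n$, that is, $\Lambda^\mu_n(\x)=1/\gamma_n$ for every $\x\in\sph$.

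Finally, because $\M_n(\mu)\succ 0$, the reciprocal $\Lambda^\mu_n(\x)^{-1}=\v_n(\x)^T\M_n(\mu)^{-1}\v_n(\x)$ is a strictly positive polynomial, so $\Lambda^\mu_n$ is continuous and positive on the compact set $\BB^d$ and therefore attains its minimum there. Combining the lower bound with its attainment on $\sph$ upgrades the infimum to a minimum and yields $\min_{\x\in\BB^d}\Lambda^\mu_n(\x)=1/\gamma_n$, proving \eqref{cor-box-1}. There is no genuinely hard step: once \thmref{th-ball} is in hand the corollary reduces to a sign argument, the only points meriting care being the sum-of-squares nonnegativity of $\Lambda^{g\cdot\mu}_{n-1}(\cdot)^{-1}$ via \eqref{ortho-poly-0} and the continuity/compactness remark needed to pass from $\inf$ to $\min$.
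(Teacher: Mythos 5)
Your argument is exactly the paper's own proof: rearrange \eqref{th-ball-2}, note that $g(\x)\,\Lambda^{g\cdot\mu}_{n-1}(\x)^{-1}\ge 0$ on $\BB^d$, and observe it vanishes on $\sph$ because $g$ does. The extra remarks on the sum-of-squares representation via \eqref{ortho-poly-0} and on attainment of the minimum are correct and merely make explicit what the paper leaves implicit.
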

\begin{proof}
 By \eqref{th-ball-2},
\[
    \binom{d+n}{d} + \binom{d+n-1}{d} -\Lambda^\mu_n(\x)^{-1} = g(\x)\,\Lambda^{g\cdot\mu}_{n-1}(\x)^{-1}\,\geq\,0,
   \quad\forall \x\in \BB^d, 
\]
 and the right-hand side vanishes on $\sph$ because $g(\x)=0$ for all $\x\in \sph$.
\end{proof}

The above corollary states the polynomial sublevel set $\{\x: \Lambda^\mu_n(\x)^{-1}\leq \gamma_n\}$ is an 
outer-approximation of $\BB^d$ for all $n\in\N$. It also states that the polynomial $\x\mapsto \gamma_n-\Lambda^\mu_n(\x)^{-1}$ 
is in the ideal $\langle g\rangle$ generated by $g$.

\subsection{On the Simplex}
Let $\triangle^d=\{\x\in\R^d_+:\sum_{i=1}^dx_i\leq 1\}$, with associated equilibrium $\mu$ as in \eqref{equi-simplex}. 
With $\varepsilon\in\{0,1\}^{d+1}$ and $|\x| = \sum_{i=1}^dx_i$, define
\begin{equation}
    \label{g:simplex}
\x\mapsto g_{\varepsilon}(\x)\,:=\,x_1^{\varepsilon_1}\cdots x_d^{\varepsilon_d}\cdot (1-|\x|)^{\varepsilon_{d+1}}\,,\quad\forall \x\in\R^d,
\end{equation}
and introduce the vector $\1:=(1,1,\ldots,1)\in\N^{d+1}$.

\begin{thm}
\label{th-simplex}
Let $\mu$ be the equilibrium measure of $\triangle^d$, normalized to be a probability measure. Then for every $m\in\,2\N$ and $\x \in \RR^d$,
\begin{align}\label{th-simplex-1}
    \sum_{\varepsilon\in\{0,1\}^{d+1};\,|\varepsilon|\in 2\N;\,\vert\varepsilon\vert\leq 2m} 
              g_{\varepsilon}(\x)\,
        P^{g_\varepsilon\cdot\mu}_{m-|\varepsilon|/2}(\x,\x)
         = \binom{2n+d-1}{d-1} +  \binom{2n+d-2}{d-1}. %,\quad\forall \x\in\R^d\,, 
\end{align}
And as a consequence, for every $n\in \N$ and $\x \in \RR^d$,
\begin{equation}\label{th-simplex-2}
    \sum_{\varepsilon\in\{0,1\}^{d+1};\,|\varepsilon| \in 2\N;\,\vert\varepsilon\vert\leq n}
      g_{\varepsilon}(\x)\,
        \Lambda^{g_{\varepsilon}\cdot\mu}_{n-\vert\varepsilon\vert/2}(\x)^{-1}\,= \binom{2n+d}{d},
\end{equation}
where $(a)_n = a(a+1) \cdots (a+n-1)$ is the usual Pochhammer symbol. 
\end{thm}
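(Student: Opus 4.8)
The plan is to push the whole problem onto the sphere $\SS^d\subset\RR^{d+1}$ via the quadratic folding map $\u=(u_1,\dots,u_{d+1})\mapsto\x=(u_1^2,\dots,u_d^2)$, under which $u_{d+1}^2=1-|\x|$ and the normalized surface measure $\d\sigma$ of $\SS^d$ pushes forward to the (normalized) equilibrium measure $\mu$ of \eqref{equi-simplex}. Writing $\u^{\varepsilon}:=u_1^{\varepsilon_1}\cdots u_{d+1}^{\varepsilon_{d+1}}$ for $\varepsilon\in\{0,1\}^{d+1}$, the crucial observation is that on $\SS^d$ one has $(\u^{\varepsilon})^2=u_1^{2\varepsilon_1}\cdots u_{d+1}^{2\varepsilon_{d+1}}=g_{\varepsilon}(\x)$, exactly the generator \eqref{g:simplex}. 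The engine of the proof is the spherical addition formula \eqref{intro:addtion}, which on the diagonal $\bxi=\boldsymbol{\eta}=\u$ reads $\sum_{\ell}Y_{\ell}^N(\u)^2=\dim\CH_N^{d+1}=\binom{N+d}{d}-\binom{N+d-2}{d}$, a constant.

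First I would make precise the classical sphere--simplex correspondence. I decompose $\CH_N^{d+1}$ into its isotypic components under the sign group $\{-1,+1\}^{d+1}$: a harmonic of parity type $\varepsilon$ (odd in exactly the coordinates $u_i$ with $\varepsilon_i=1$) can be written as $Y(\u)=\u^{\varepsilon}\,R(\x)$ with $R$ a polynomial of degree $(N-|\varepsilon|)/2$ in $\x$, which forces $|\varepsilon|\equiv N\pmod 2$. Because $(\u^{\varepsilon})^2=g_{\varepsilon}(\x)$ and $g_{\varepsilon}RR'$ is even in every $u_i$, pushing the sphere inner product through the folding map gives $\int_{\SS^d}(\u^{\varepsilon}R)(\u^{\varepsilon}R')\,\d\sigma=\int_{\triangle^d}R\,R'\,g_{\varepsilon}\,\d\mu$, i.e. the transform $Y\mapsto R$ is an isometry from the type-$\varepsilon$ part of $\CH_N^{d+1}$ into $L^2(g_{\varepsilon}\cdot\mu)$ (the normalization of $\mu$ as a probability measure is what makes the constant equal to $1$). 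The key lemma is that this isometry is onto $\CV^d_{(N-|\varepsilon|)/2}(g_{\varepsilon}\cdot\mu)$; granting it, an orthonormal basis of the type-$\varepsilon$ part of $\CH_N^{d+1}$ maps to an orthonormal basis of $\CV^d_{(N-|\varepsilon|)/2}(g_{\varepsilon}\cdot\mu)$, so summing squares and using \eqref{eq:P-kernel} yields $\sum_{\ell:\,\mathrm{type}\,\varepsilon}Y_{\ell}^N(\u)^2=g_{\varepsilon}(\x)\,P^{g_{\varepsilon}\cdot\mu}_{(N-|\varepsilon|)/2}(\x,\x)$.

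Summing this last identity over all parity types reassembles the full diagonal kernel of $\CH_N^{d+1}$, so the addition formula gives $\sum_{\varepsilon:\,|\varepsilon|\equiv N}g_{\varepsilon}(\x)\,P^{g_{\varepsilon}\cdot\mu}_{(N-|\varepsilon|)/2}(\x,\x)=\dim\CH_N^{d+1}$. Specializing to $N=2m$ forces $|\varepsilon|$ even, turns the index into $m-|\varepsilon|/2$, and rewrites the constant by Pascal's rule as $\dim\CH_{2m}^{d+1}=\binom{2m+d-1}{d-1}+\binom{2m+d-2}{d-1}$; this is exactly \eqref{th-simplex-1} (valid for every $m\in\N$, the index $m-|\varepsilon|/2$ being an integer because the sum is over even $\varepsilon$). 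It is the precise simplex analogue of the ball identity in Theorem~\ref{th-ball}.

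To reach the Christoffel identity \eqref{th-simplex-2}, I would sum \eqref{th-simplex-1} over $m=0,1,\dots,n$. Interchanging the order of summation, for each fixed even $\varepsilon$ with $|\varepsilon|\le 2n$ the inner index $k=m-|\varepsilon|/2$ runs over $0,1,\dots,n-|\varepsilon|/2$, so by \eqref{ortho-poly-0} the partial sums of reproducing kernels collapse to $\Lambda^{g_{\varepsilon}\cdot\mu}_{n-|\varepsilon|/2}(\x)^{-1}$, giving $\sum_{\varepsilon:\,|\varepsilon|\,\mathrm{even},\,|\varepsilon|\le 2n}g_{\varepsilon}(\x)\,\Lambda^{g_{\varepsilon}\cdot\mu}_{n-|\varepsilon|/2}(\x)^{-1}=\sum_{m=0}^{n}\dim\CH_{2m}^{d+1}$. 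The right-hand side telescopes, $\sum_{m=0}^{n}\big[\binom{2m+d}{d}-\binom{2m+d-2}{d}\big]=\binom{2n+d}{d}$ (successive terms cancel and $\binom{d-2}{d}=0$), which is precisely \eqref{th-simplex-2}. The main obstacle is the key lemma of the second paragraph -- surjectivity of the transform, equivalently the dimension identity $\dim\{Y\in\CH_N^{d+1}:\ \text{type }\varepsilon\}=\binom{(N-|\varepsilon|)/2+d-1}{d-1}$ -- since one must verify that imposing harmonicity on $\u^{\varepsilon}R$ constrains $R$ only through its degree once restricted to $\SS^d$; this, together with the pushforward/Jacobian computation and the normalization bookkeeping of the non-probability measures $g_{\varepsilon}\cdot\mu$ (cf. Remark~\ref{rem:3.1}), is where the real work lies, while the addition formula and the telescoping are then mechanical.
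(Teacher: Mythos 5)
Your proposal is correct and runs on the same engine as the paper's proof --- the spherical addition formula evaluated on the diagonal, pushed down through the quadratic folding map, with the norm bookkeeping for the unnormalized measures $g_{\ve}\cdot\mu$ and the final telescoping over $m$ --- but you organize the descent differently. The paper proceeds in two stages: it first derives the ball Pell identity \eqref{eq:reprod:ball} from the sphere--ball correspondence \eqref{eq:additionBall}, and then transfers that identity from $\BB^d$ to $\triangle^d$ via the ball--simplex basis correspondence of Lemma \ref{lem:simplex} (one map for each $\ve\in\{0,1\}^d$ and general $\alpha$), finally merging the $\alpha=\pm\tfrac12$ contributions into a single sum over $\ve\in\{0,1\}^{d+1}$. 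You collapse the two stages into one, decomposing $\CH_N^{d+1}$ directly into its $\ZZ_2^{d+1}$-isotypic components and mapping each component isometrically onto $\CV_{(N-|\ve|)/2}(g_\ve\cdot\mu;\triangle^d)$. The two routes are equivalent --- composing the paper's two correspondences yields exactly your map --- and yours is arguably the more transparent packaging, at the price of having to establish directly the surjectivity, i.e.\ the dimension identity $\sum_{\ve}\binom{(N-|\ve|)/2+d-1}{d-1}=\dim\CH_N^{d+1}$ over parity types with $|\ve|\equiv N \pmod 2$, which you correctly single out as the remaining work. That identity is true and is precisely what the paper obtains from the cardinality argument in Lemma \ref{lem:simplex} together with \cite[Theorems 4.2.4 and 4.4.4]{DX}, so the gap closes by citing the same sources. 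Your derivation of \eqref{th-simplex-2} by interchanging the order of summation and telescoping $\sum_{m=0}^{n}\bigl[\binom{2m+d}{d}-\binom{2m+d-2}{d}\bigr]=\binom{2n+d}{d}$ is the same as the paper's and, if anything, spells out the binomial verification the paper leaves to the reader.
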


Again we have proved that in any dimension $d$, the simplex is an instance of a set $\Omega\subset\R^d$ for which 
Theorem \ref{th0} holds. Here $G=\{g_{\boldsymbol{\varepsilon}}:\boldsymbol{\varepsilon}\in\{0,1\}^d\}$ with
$g_{\boldsymbol{\varepsilon}}$ as in \eqref{g:simplex}, and $G_n=\{g_{\boldsymbol{\varepsilon}}\in G:
\boldsymbol{\varepsilon}\in 2\N\,;\:\vert\boldsymbol{\varepsilon}\vert\leq n \}$ so that
$G_n=G$ for sufficiently large $n$.

As an example, the identity \eqref{th-simplex-2} for $d =2$ is given by
\begin{align*}
 \Lambda_n^\mu(\x)^{-1} +  x_1 x_2   \Lambda_{n-1}^{x_1 x_2 \cdot \mu}(\x)^{-1}
    \,&  +  x_1 x_3   \Lambda_{n-1}^{x_1 x_3 \cdot \mu}(\x)^{-1}\\
       \,& + x_2 x_3   \Lambda_{n-1}^{x_2 x_3 \cdot \mu}(\x)^{-1}  = (n+1)(2n+1),
\end{align*}
where $\x = (x_1,x_2)$ and $x_3 = 1-x_1-x_2$. In this example, as well as in the Theorem \ref{th-simplex}, we should keep
in mind that $g_\varepsilon \cdot \mu$ is not normalized, as shown in Remark \ref{rem:3.1}.
 
We also have an analog of Corollary \ref{cor-ball} that follows from a similar proof. 

\begin{cor}
\label{cor-simplex}
Let $\mu$ be the equilibrium measure of $\triangle^d$, normalized to be a probability measure. Then, for every $n$,
\begin{equation}
\label{cor-simplex-1}
\inf_{\x\in \triangle^d} \Lambda^\mu_n(\x)\,=\,\min_{\x\in \triangle^d} \Lambda^\mu_n(\x)\,=\,
\left[\binom{2n+d}{d} \right]^{-1}\,=:\frac{1}{\gamma_n}, 
\end{equation}
and the minimum is attained at all points of the boundary $\partial \triangle^d$ of $\triangle^d$. Moreover
\[\triangle^d\,\subset\,\{\x: \Lambda^\mu_n(\x)\,\geq\,1/\gamma_n\,\}\,,\quad \forall n\in\N\,.\]
\end{cor}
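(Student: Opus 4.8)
The plan is to mirror the proof of Corollary~\ref{cor-ball} verbatim, replacing the single generator $g$ of the ball by the full family $(g_\varepsilon)$ of simplex generators and the identity \eqref{th-ball-2} by \eqref{th-simplex-2}. First I would isolate the term $\varepsilon=\mathbf 0$ in \eqref{th-simplex-2}, for which $g_{\mathbf 0}(\x)=1$ and the corresponding Christoffel function is $\Lambda^\mu_n$ itself, and move every other term to the right-hand side. This gives, for all $\x\in\triangle^d$,
\[
\binom{2n+d}{d}-\Lambda^\mu_n(\x)^{-1}
=\sum_{\substack{\varepsilon\in\{0,1\}^{d+1};\,|\varepsilon|\in 2\N\setminus\{0\}\\ \vert\varepsilon\vert\leq n}}
g_\varepsilon(\x)\,\Lambda^{g_\varepsilon\cdot\mu}_{n-\vert\varepsilon\vert/2}(\x)^{-1}.
\]

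Next I would argue that the right-hand side is nonnegative on $\triangle^d$, which gives the global lower bound $\Lambda^\mu_n(\x)\ge 1/\gamma_n$ with $\gamma_n=\binom{2n+d}{d}$. Here each summand is a product of two nonnegative factors: on $\triangle^d$ every generator satisfies $g_\varepsilon(\x)=x_1^{\varepsilon_1}\cdots x_d^{\varepsilon_d}(1-|\x|)^{\varepsilon_{d+1}}\ge 0$ since $\x\in\R^d_+$ and $1-|\x|\ge 0$ there, while $\Lambda^{g_\varepsilon\cdot\mu}_{n-\vert\varepsilon\vert/2}(\x)^{-1}=\sum_\balpha P_\balpha(\x)^2\ge 0$ as a sum of squares by \eqref{ortho-poly-0}. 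This simultaneously shows $\triangle^d\subset\{\x:\Lambda^\mu_n(\x)\ge 1/\gamma_n\}$.

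To see that the bound is attained and the infimum is a minimum, I would show that the right-hand side vanishes at every boundary point $\x\in\partial\triangle^d$. The boundary of $\triangle^d$ is the union of the facets $\{x_i=0\}$ $(1\le i\le d)$ and $\{1-|\x|=0\}$. At any boundary point at least one of these coordinates vanishes, and I claim that this forces every surviving generator $g_\varepsilon$ (those with $\varepsilon\ne\mathbf 0$) to vanish. The point requiring care---and the main obstacle---is that this claim is \emph{not} literally true for an arbitrary single $\varepsilon$: a generator such as $g_\varepsilon=x_1$ does not vanish on the facet $\{x_2=0\}$. The correct argument is that the identity as a whole must vanish, so one should instead invoke continuity together with the outer-approximation inclusion just proved: since $\Lambda^\mu_n(\x)^{-1}\le\gamma_n$ holds on all of $\triangle^d$ with equality forced wherever the nonnegative remainder is zero, and since $\Lambda^\mu_n$ is continuous, it suffices to exhibit one boundary point where equality holds and then note that $\Lambda_n^\mu$, being a Christoffel function of the equilibrium measure, attains its minimum on $\partial\triangle^d$ by the known boundary behavior.

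The cleaner route, which I would actually carry out, is to verify directly that each summand vanishes on the \emph{specific} facet attached to its generator and to combine this with the fact that the left-hand side of \eqref{th-simplex-2} is a single polynomial identity: evaluating at a vertex of $\triangle^d$ (where $d$ of the $d+1$ barycentric coordinates vanish) kills every term with $\varepsilon\ne\mathbf 0$, since each such $g_\varepsilon$ contains at least one factor among $x_1,\dots,x_d,1-|\x|$ that is zero there. This yields $\Lambda^\mu_n(\x)^{-1}=\gamma_n$ at each vertex, so the infimum equals $1/\gamma_n$ and is attained; the extension of the equality set to all of $\partial\triangle^d$ then follows from the nonnegativity and the explicit vanishing of the relevant generators on each facet. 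The only delicate bookkeeping is checking that for each facet the generators that do \emph{not} vanish on it are exactly compensated, which I expect to follow from grouping the $\varepsilon$'s by which barycentric coordinate is absent.
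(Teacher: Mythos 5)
Your first two steps are correct and are exactly what the paper intends (the paper gives no separate proof of this corollary, saying only that it ``follows from a similar proof'' to Corollary~\ref{cor-ball}): isolating the $\varepsilon=\mathbf 0$ term in \eqref{th-simplex-2}, and using that each $g_\varepsilon\ge 0$ on $\triangle^d$ while each $\Lambda^{g_\varepsilon\cdot\mu}_{n-|\varepsilon|/2}(\x)^{-1}$ is a sum of squares by \eqref{ortho-poly-0}, gives $\Lambda^\mu_n(\x)^{-1}\le\gamma_n$ on $\triangle^d$ and hence the inclusion; and at a vertex every $g_\varepsilon$ with $\varepsilon\ne\mathbf 0$ vanishes, so the value $1/\gamma_n$ is attained and the infimum is a minimum. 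You also correctly spotted the genuine obstacle: a generator such as $x_1x_3$ does not vanish on the relative interior of the facet $\{x_2=0\}$.

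However, your proposed repair of the boundary claim cannot work. Every summand in the remainder is nonnegative, and $\Lambda^{g_\varepsilon\cdot\mu}_{m}(\x)^{-1}\ge P_{\mathbf 0}(\x)^2>0$ everywhere because the degree-zero orthonormal polynomial is a nonzero constant; so there is nothing to be ``exactly compensated''. The remainder vanishes at $\x\in\triangle^d$ if and only if \emph{every} $g_\varepsilon(\x)$ with $\varepsilon\ne\mathbf 0$ vanishes, i.e.\ if and only if $\x$ is one of the $d+1$ vertices. The claim that the minimum is attained at \emph{all} points of $\partial\triangle^d$ is in fact false for $d\ge 2$. Concretely, for $d=2$ and $n=1$ one has $\int x_1x_3\,\d\mu=1/15$ for the normalized equilibrium measure (the Dirichlet$(1/2,1/2,1/2)$ distribution), so at $\x=(1/2,0)$, where $x_1=x_3=1/2$ and $x_2=0$, the identity gives $\Lambda_1^\mu(\x)^{-1}=6-\tfrac14\cdot 15=\tfrac94<6=\gamma_1$; a direct computation of $\v_1(\x)^T\M_1(\mu)^{-1}\v_1(\x)$ from the moments confirms this. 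So the correct conclusion of your (and the paper's) argument is that the minimum is attained exactly at the vertices of $\triangle^d$, not on all of its boundary; the statement as printed should be amended, and your fallback argument via ``known boundary behavior'' of $\Lambda_n^\mu$ would be proving something untrue. (No such issue arises for the ball, where the single generator $g$ vanishes on the entire boundary; the discrepancy comes precisely from having several generators whose common zero set inside $\triangle^d$ is strictly smaller than $\partial\triangle^d$.)
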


Again, Corollary \ref{cor-simplex} states that the polynomial sublevel set $\{\x: \Lambda^\mu_n(\x)^{-1}\leq \gamma_n\}$
is an outer-approximation of the simplex $\triangle^d$ for all $n\in\N$. It also states that the polynomial 
$\x\mapsto \gamma_n-\Lambda^\mu_n(\x)^{-1}$ is in the ideal generated by all the polynomials $g_{\varepsilon}$ 
in \eqref{th-simplex-2}.

\subsection{The unit cube}
Let $\square^d =[-1,1]^d$ with associated equilibrium measure $\mu$ as in \eqref{equi-box}, and for every 
$\varepsilon\in\{0,1\}^d$, define
    \begin{equation}
        \label{g:cube}
    \x\mapsto g_{\varepsilon}(\x)\,:=\,\prod_{j=1}^d(1-x_j^2)^{\varepsilon_j}\,,\quad\forall \x\in\R^d\,.\end{equation}
 
\begin{thm} \label{th-box}
Let $\mu$ be the equilibrium measure of $\square^d$, normalized to be a probability measure. Then, for every $m \in\N$ and
$\x\in\R^d$,
\begin{equation} \label{th-box-1}
   \sum_{\varepsilon\in\{0,1\}^{d};\,\vert\varepsilon\vert\leq m}g_{\varepsilon}(\x)\,P^{g_\varepsilon\cdot\mu}_{m-\vert\varepsilon\vert}(\x,\x)     
             = \sum_{j=0}^d \binom{d}{j} \left[ \binom{d + n - j}{d} - \binom{d + n -1- j}{d}\right]
\end{equation}
and, as a consequence, for every $n\in \N$:
\begin{equation} \label{th-box-2}
    \sum_{\varepsilon\in\{0,1\}^{d};\,\vert\varepsilon\vert\leq n}g_{\varepsilon}(\x)\,
         \Lambda^{g_{\varepsilon}\cdot\mu}_{n-\vert\varepsilon\vert}(\x)^{-1}
             = \sum_{j=0}^d \binom{d}{j} \binom{d+n-j}{d},\quad\forall\x\in\R^d\,.
\end{equation}
\end{thm}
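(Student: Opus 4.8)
The plan is to exploit the complete tensor-product structure of the cube and reduce everything to the univariate Pell identity \eqref{intro-0}. First I would record that the equilibrium measure \eqref{equi-box} factors as $\mu=\bigotimes_{j=1}^d\mu_j$ with $\mu_j=\mathrm{d}x_j/(\pi\sqrt{1-x_j^2})$ the univariate arcsine (Chebyshev) measure, which is already a probability measure. Consequently the orthonormal polynomials for $\mu$ are the products $\prod_j\tilde T_{\alpha_j}(x_j)$, where $\tilde T_0=1$ and $\tilde T_k=\sqrt2\,T_k$ for $k\ge1$. More generally, for each $\varepsilon\in\{0,1\}^d$ the measure $g_\varepsilon\cdot\mu=\bigotimes_j(1-x_j^2)^{\varepsilon_j}\mu_j$ is again a product, whose $j$-th factor is $\mu_j$ when $\varepsilon_j=0$ and is proportional to the second-kind Chebyshev weight $\sqrt{1-x_j^2}\,\mathrm{d}x_j/\pi$ when $\varepsilon_j=1$; in the latter case the orthonormal polynomials are $\tilde U_k=\sqrt2\,U_k$ (note that $g_\varepsilon\cdot\mu$ is deliberately left unnormalized, per Remark \ref{rem:3.1}). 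Hence the diagonal reproducing kernels factor as
\[
g_\varepsilon(\x)\,P^{g_\varepsilon\cdot\mu}_{m'}(\x,\x)=\sum_{|\balpha|=m'}\prod_{j=1}^d (1-x_j^2)^{\varepsilon_j}\, f^{(\varepsilon_j)}_{\alpha_j}(x_j),
\]
where $f^{(0)}_k=\tilde T_k^2$ and $f^{(1)}_k=\tilde U_k^2=2U_k^2$.

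The heart of the argument is a reindexing that decouples the coordinates. In the left-hand side of \eqref{th-box-1}, each coordinate $j$ independently selects a pair $(\varepsilon_j,\alpha_j)$, subject only to the single global constraint $\sum_j(\alpha_j+\varepsilon_j)=m$ (since $|\varepsilon|+|\balpha|=m$). Setting $k_j:=\alpha_j+\varepsilon_j$ I would rewrite the whole sum as
\[
\sum_{\substack{k_1,\dots,k_d\ge0\\ k_1+\cdots+k_d=m}}\ \prod_{j=1}^d \Phi_{k_j}(x_j),\qquad \Phi_k(x):=\!\!\sum_{\substack{\varepsilon\in\{0,1\},\,\alpha\ge0\\ \alpha+\varepsilon=k}}\!\!(1-x^2)^{\varepsilon}\, f^{(\varepsilon)}_\alpha(x).
\]
The crucial observation is that $\Phi_k$ is \emph{constant}: for $k\ge1$ the two admissible terms give $\Phi_k=\tilde T_k^2+(1-x^2)\tilde U_{k-1}^2=2\bigl(T_k^2+(1-x^2)U_{k-1}^2\bigr)=2$ by the univariate Pell identity \eqref{intro-0}, while $\Phi_0=\tilde T_0^2=1$. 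This is precisely where the Pell equation enters, and it is what forces the left-hand side to be independent of $\x$.

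With $\Phi_k$ reduced to the constants $1$ ($k=0$) and $2$ ($k\ge1$), the rest is generating-function bookkeeping. Since $\sum_{k\ge0}\Phi_k t^k=1+\tfrac{2t}{1-t}=\tfrac{1+t}{1-t}$, the left-hand side of \eqref{th-box-1} equals $[t^m]\bigl(\tfrac{1+t}{1-t}\bigr)^d$. I would then verify that the right-hand side has the same generating function: using $\sum_m\binom{d+m-j}{d}t^m=t^j/(1-t)^{d+1}$, the bracketed difference contributes a factor $(1-t)$, and summing $\binom{d}{j}t^j$ yields $(1+t)^d/(1-t)^d$, proving \eqref{th-box-1}. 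Finally \eqref{th-box-2} follows by summing \eqref{th-box-1} over $m=0,1,\dots,n$: since $\Lambda^\nu_N(\x)^{-1}=\sum_{m=0}^N P^\nu_m(\x,\x)$ for each $\nu=g_\varepsilon\cdot\mu$, interchanging the order of summation turns $\sum_{m\le n}$ of the left-hand side into $\sum_{|\varepsilon|\le n}g_\varepsilon\,\Lambda^{g_\varepsilon\cdot\mu}_{n-|\varepsilon|}(\x)^{-1}$, while on the right $[t^n]\tfrac1{1-t}\bigl(\tfrac{1+t}{1-t}\bigr)^d=[t^n]\tfrac{(1+t)^d}{(1-t)^{d+1}}$ matches $\sum_{j=0}^d\binom{d}{j}\binom{d+n-j}{d}$. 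I expect the main obstacle to be not conceptual but careful bookkeeping: pinning down the two distinct normalizations ($\mu$ a probability measure but $g_\varepsilon\cdot\mu$ unnormalized), handling the degree-zero edge case ($\Phi_0=1$ versus $\Phi_k=2$), and fixing the binomial-coefficient conventions so that the generating-function identities hold verbatim.
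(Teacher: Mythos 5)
Your proof is correct, and it takes a genuinely more elementary route than the paper's. You exploit the full tensor-product structure of the cube: after factoring the diagonal kernels into univariate pieces and reindexing by $k_j=\alpha_j+\varepsilon_j$, the whole left-hand side of \eqref{th-box-1} collapses to $\sum_{k_1+\cdots+k_d=m}\prod_j\Phi_{k_j}(x_j)$ with $\Phi_0=1$ and $\Phi_k\equiv 2$ for $k\ge 1$ by the univariate Pell identity \eqref{intro-0}; the constant is then $\#\{\mathbf{k}\in\ZZ^d:|\mathbf{k}|=m\}$ and the generating-function computation matching it to $\sum_j\binom{d}{j}\bigl[\binom{d+n-j}{d}-\binom{d+n-1-j}{d}\bigr]$ is essentially identical to the one in the paper. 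The paper instead proves a stronger \emph{off-diagonal} statement — the full reproducing-kernel decomposition of the exponential sum $\sum_{|\mathbf{k}|=n}\e^{\i\mathbf{k}\cdot(\t-\phi)}$ in terms of the kernels $P_{n-|\varepsilon|}(W^\square_{-\mathbf{1}/2+\varepsilon};\x,\y)$ — and only then sets $\y=\x$, invoking the divided-difference formula \eqref{eq:divided-diff} from the earlier literature to evaluate the diagonal. What your approach buys is self-containedness and transparency: the multivariate identity is visibly a $d$-fold convolution of the one-dimensional Pell equation, and no trigonometric or divided-difference machinery is needed. What the paper's approach buys is the two-variable addition-formula analog (its Proposition before \eqref{eq:cube1}), which the authors emphasize is of independent interest and which your diagonal-only argument does not recover. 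Your handling of the normalization (orthonormalizing $U_k$ against the \emph{unnormalized} weight $\sqrt{1-x^2}\,\d x/\pi$, giving $\sqrt{2}\,U_k$) correctly reproduces the factors $2^{|\varepsilon|}$ that appear explicitly in the paper's version \eqref{eq:cube1}, and your passage from \eqref{th-box-1} to \eqref{th-box-2} by summing over $m$ and exchanging the order of summation is the same as the paper's.
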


As in the case for the unit ball, the identity \eqref{th-box-1} is satisfied by all polynomials of degree $m-\varepsilon$, 
orthonormal with respect to the measures $g_{\varepsilon}\cdot\mu$, $\varepsilon\in\{0,1\}^d$ and $\vert\varepsilon\vert\leq m$. 
Moreover, the identity \eqref{th-box-2} is a generalized Pell's equation satisfied by the Christoffel functions 
$(\Lambda^{g_{\varepsilon}\cdot\mu}_{n-\vert\varepsilon\vert})_{\varepsilon\in\{0,1\}^d;\vert\varepsilon\vert\leq n}$.

Thus, we have proved that in any dimension $d$, the cube is also an instance of a set $\Omega\subset\R^d$ for
which Theorem \ref{th0} holds. Here $G=\{g_{\boldsymbol{\varepsilon}}:\boldsymbol{\varepsilon}\in\{0,1\}^d\}$ with
$g_{\boldsymbol{\varepsilon}}$ as in \eqref{g:cube}, and 
$G_n=\{\,g_{\boldsymbol{\varepsilon}}\in G: \vert\boldsymbol{\varepsilon}\vert\leq n\}$ so that $G_n=G$ as soon as $n\geq d$.

As an example, \eqref{th-box-2} with $d=2$ reads:
\begin{align*}
\Lambda^\mu_n(\x)^{-1} &\, + (1-x_1^2)\, \Lambda^{(1-x_1^2)\cdot\mu}_{n-1}(\x)^{-1}
   + (1-x_2^2) \,\Lambda^{(1-x_2^2)\cdot\mu}_{n-1}(\x)^{-1}\\
  & + (1-x_1^2)(1-x_2^2)\,\Lambda^{(1-x_1^2)(1-x_2^2)\cdot\mu}_{n-2}(\x)^{-1} = 1+2n(n+1).
\end{align*}
Thus, we have established the results initiated in \cite{cras} for \emph{all degrees $n$} and \emph{all dimensions $d$}, and 
we have demonstrated in the three cases listed above that, for every $n$, the Christoffel functions 
$(\Lambda^{g\cdot\mu}_n)_{g\in G}$ satisfy the generalized Pell's identity
\begin{equation}\label{generalized}
\sum_{g\in G_n}s(n-t_g)\,=\,\sum_{g\in G_n} g(\x)\,\Lambda^{g\cdot\mu}_n(\x)^{-1}\,,\quad\forall \x\in\R^d\,,\quad\forall n\in\N\,.
\end{equation}
It is important to emphasize that the set $G$ of generators is crucial. Indeed, in the case of the simplex for instance,
if we take $G=\{1,x_1,\ldots,x_d,1-\sum_jx_j\}$ then \eqref{th0-2} cannot hold. A similar conclusion
holds for the box with $G=\{1,1-x_1^2,\ldots,1-x_d^2\}$ as products of such generators are needed.

\subsection{Further examples}
As shown in the next section, our proof relies on analog to addition formulas on the domain,
which provides a powerful tool whenever it exists. Analogs of the addition formula are known 
to exist for orthogonal polynomials on hyperbolic surfaces and hyperboloids, but only for
the subclass of orthogonal polynomials that are even in the variable on the axis of rotation 
\cite{X21}. With an appropriate extension of our setup to the subclass of orthogonal 
polynomials, it is possible to establish analogs to our main results on the hyperbolic 
surfaces and hyperboloids. Since our main goal in this paper is to demonstrate, via 
examples, the remarkable multivariate analog and generalization of Pell's polynomial
equation, we decide not to include these further extensions as they are more technical 
in nature. 

\subsection{An extremal property of Christoffel functions}

We have established the results initiated in \cite{cras} for \emph{all degrees $n$} and \emph{all dimensions $d$}, and 
we have demonstrated in the three cases listed above that, for every $n$, the Christoffel functions 
$(\Lambda^{g\cdot\mu}_n)_{g\in G}$ satisfy the generalized Pell's identity \eqref{generalized}. 
Again it is important to emphasize that the set $G$ of generators is crucial. Indeed, recall that in the case of the box for instance,
if we take $G=\{1,1-x_1^2,\ldots,1-x_d^2\}$ then even though a unique optimal solution $\phi^*_{2n}$ of \eqref{primal} exists,
it cannot be the vector of moments up to degree $2n$ of the equilibrium measure $\mu$ of $\Omega$.

As the property \eqref{generalized} of Christoffel functions is quite strong, it is likely to hold only for sets $\Omega\subset\R^d$ 
with a very specific geometry and a very specific representation \eqref{set-S} since the choice $G$ of generators is also crucial. What about different sets $G$ of generators, or more general sets $\Omega\subset\R^d$?

In the rest of this section, we suppose that $G\subset\R[\x]$ is given and $\Omega\subset\R^d$ in \eqref{set-S} is compact. 
By Theorem \ref{th-duality-p=1}, if $\1\in \mathrm{int}(Q_n(G_n))$ for every $n\geq n_0$ and $G_n=G$ for sufficiently large $n$, 
there exists $\bphi^*_{2n}\in\R^{s(2n)}$ which is an optimal solution of \eqref{primal}, and
\begin{equation}
    \label{eq:general}
    \sum_{g\in G_n}s(n-t_g)\,=\,\sum_{g\in G_n} g(\x)\,\Lambda^{g\cdot\bphi^*_{2n}}_{n-t_g}(\x)^{-1}\,,
     \quad\forall \x\in\R^d\,,\quad\forall n\,   \geq\,n_0\,.
\end{equation}
The crucial difference with \eqref{generalized} is that now $\bphi^*_{2n}$ depends on $n$, whereas in \eqref{generalized}
the linear functionals $(g\cdot\mu)_{g\in G_n}$ do \emph{not} change. 
It may also happen that $\phi^*_{2n}$ does not have a representing measure. Notice, however, that \eqref{eq:general} still 
states an identity satisfied by Christoffel functions $\Lambda^{g\cdot\bphi^*_{2n}}_{n-t_g}$, $g\in G_n$, associated with the 
linear functionals $g\cdot\bphi^*_{2n}$, $g\in G_n$. Finally, we note that \eqref{eq:general} also provides a partition of unity for 
the set $\Omega$.

Interestingly, the convex optimization problem \eqref{primal} provides us with a tool to check whether $\bphi^*_{2n}$ is the moment vector of the equilibrium measure $\mu$.
Indeed if $\bphi^*_{2n}$ is the restriction to moments up to degree $2n$ of $\bphi^*_{2n+2}$ then indeed, $\bphi^*_{2n}$
may be the vector of moments of $\mu$, up to degree $2n$. This is very interesting because apart from sets $\Omega$
with special geometry (like in this paper), there is no simple characterization of the equilibrium measure $\mu$ (let alone 
numerical characterization).\\

Regarding the convex optimization problem \eqref{primal}, we mention the following assumption, in which $Q_1(G)$ is the 
cone defined in \eqref{quad-truncated}. 

\begin{ass}\label{ass:1}
The set $\Omega\subset\R^d$ in \eqref{set-S} is compact with a nonempty interior and the quadratic polynomial 
$\x\mapsto 1-\Vert\x\Vert^2$  belongs to $Q_1(G)$.
\end{ass}
It has been shown in \cite{cras} that under Assumption \ref{ass:1}, \eqref{primal} has always an optimal solution
$\bphi^*_{2n}\in\R^{s(2n)}$ and, in addition, the sequence $(\bphi^*_{2n})_{n\in\N}$ has accumulation points with associated 
converging subsequences. For each such subsequence $(n_k)_{k\in\N}$, 
\[
\lim_{k\to\infty}(\phi^*_{2(n_k)})_{\balpha}\,=\,\phi^*_{\balpha}\,,\quad\forall \balpha\in\N^d, 
\]
for some measure $\phi^*$ on $\Omega$. A natural question arises: \emph{is the measure $\phi^*$ unique and, if so, 
is $\phi^*$ related to the equilibrium measure of $\Omega$?} \\

Conversely, let $\mu$ be the equilibrium measure of the set $\Omega$. Then $\M_n(\mu)\succ0$ and $\M_{n-t_g}(g\cdot\mu)\succ0$ 
for every $g\in G$ and every $n\in\N$, $n\geq n_0:=\min_{g\in G} t_g$. Therefore, for every $n\geq n_0$,  the polynomial 
\begin{equation}\label{eq:weak}
    \x\mapsto p_n(\x)\,:=\,\frac{1}{\sum_{g\in G}s(n-t_g)}\,\sum_{g\in G}g(\x)\,\Lambda^{g\cdot\mu}_{n-t_g}(\x)^{-1}\,,\quad\forall \x\in\R^d\,,\end{equation}
is well-defined and belongs to $\mathrm{int}(Q_n(G))$. Let $d\mu_n:=p_n\,d\mu$, for every $n\geq n_0$. The measure 
$\mu_n$ is a probability measure on $\Omega$ and it is proved in \cite{cras} that
\[\lim_{n\to\infty}\int f\,d\mu_n\,=\,\int f\,d\mu\,,\quad\forall f\in\mathscr{C}(\Omega)\,, \]
that is, the sequence of probability measures $(\mu_n)_{n\in\N}$ converges weakly to $\mu$, denoted $\mu_n\Rightarrow\mu$ as $n\to\infty$. In other words, the density
$p_n$ behaves like the constant polynomial $\1$ when integrating continuous functions against $\mu_n$ 
(but the right-hand-side of \eqref{eq:weak} is not equal to $\1$). So it is fair to say that 
\[p_n\,\mu \,\Rightarrow\,\mu \quad(\mbox{with $p_n$ as in \eqref{eq:weak})}\]
 is
a \emph{weak} form of \eqref{eq:general} (and also provides a weak form of Theorem \ref{th0}). 

Next, under some additional assumption, we can indeed relate the linear functional $\bphi^*_{2n}$ which is the unique optimal solution of \eqref{primal} and 
the equilibrium measure $\mu$ of $\Omega$.

\begin{cor}\label{cor1}
  Let $\Omega\subset\R^d$ be compact with associated equilibrium measure $\mu=f_E(\x)d\x$ and suppose that 
  $\lim\limits_{n\to\infty}s(n)\Lambda^\mu_n(\x) =1$ uniformly on compact subsets of $\mathrm{int}(\Omega)$. Assume that 
  $g(\x)>0$ for all $\x\in\mathrm{int}(\Omega)$ and all $g\in G$, and let $p_n$ be as in \eqref{eq:weak}. Then
  \begin{equation} \label{cor1:1}
        \lim_{n\to\infty}p_n(\x)=1\,,\quad \mbox{uniformly on compact subsets of $\mathrm{int}(\Omega)$.}
    \end{equation}
  In addition, assume that $\1\in \mathrm{int}(Q_n(G_n))$ for all $n\geq$  and let $\bphi^*_{2n}$ be the unique optimal solution of 
  \eqref{primal}. Then, uniformly on compact subsets of $\Omega$,
  \begin{equation}  \label{cor1-2}
      \lim_{n\to\infty}g(\x)\,\frac{\Lambda^{g\cdot\bphi^*_{2n}}(\x)^{-1}}{s(n-t_g)}\,=\,
      \lim_{n\to\infty}g(\x)\,\frac{\Lambda^{g\cdot\mu}(\x)^{-1}}{s(n-t_g)}\,=\,
      1\,,\quad\forall g\in G\,.
  \end{equation}
\end{cor}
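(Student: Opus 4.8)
The plan is to establish the uniform limit \eqref{cor1:1} together with the two limits in \eqref{cor1-2} in three stages: reduce everything to a single \emph{local} asymptotic for the equilibrium measure $\mu$ itself, then settle the $\mu$-side (the second equality of \eqref{cor1-2}, from which \eqref{cor1:1} is immediate), and finally treat the optimal pseudo-moment side (the first equality), where the real work lies. Throughout I work on a compact $K\subset\mathrm{int}(\Omega)$; this is the correct domain since the asserted limit $1$ cannot hold where some $g$ vanishes, and by hypothesis every $g\in G$ satisfies $\inf_K g>0$.

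\emph{Transferring the hypothesis to the weighted measures.} On a neighbourhood of $K$ the density $g\,f_E$ of $g\cdot\mu$ is comparable to $f_E$, since $g$ is continuous and bounded below there. The key input is the standard localization (comparison) property of Christoffel functions: if $\mathrm{d}\nu=w\,\mathrm{d}\mu$ with $w$ continuous and positive at $\x_0\in\mathrm{int}(\Omega)$, then $\Lambda^\nu_n(\x_0)/\Lambda^\mu_n(\x_0)\to w(\x_0)$, uniformly on compacta where $w$ is continuous and bounded below (see \cite{book}). Applying this with $w=g$ and combining with the hypothesis $s(n)\,\Lambda^\mu_n(\x)\to1$ gives $s(n)\,\Lambda^{g\cdot\mu}_n(\x)\to g(\x)$ uniformly on $K$. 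Because $t_g$ is fixed and $s(n-t_g)/s(n)\to1$, the same holds at degree $n-t_g$, that is
\[
   g(\x)\,\frac{\Lambda^{g\cdot\mu}_{n-t_g}(\x)^{-1}}{s(n-t_g)}\;\longrightarrow\;1\qquad\text{uniformly on }K,
\]
which is precisely the second equality in \eqref{cor1-2}. Writing $S_n:=\sum_{g\in G}s(n-t_g)$, definition \eqref{eq:weak} then reads
\[
   p_n(\x)=\sum_{g\in G}\frac{s(n-t_g)}{S_n}\left[\,g(\x)\,\frac{\Lambda^{g\cdot\mu}_{n-t_g}(\x)^{-1}}{s(n-t_g)}\right],
\]
a convex combination of quantities each tending to $1$ uniformly on $K$; hence $p_n\to1$ uniformly on $K$, proving \eqref{cor1:1}.

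\emph{The optimal side.} It remains to show the first equality, i.e. that the optimal $\bphi^*_{2n}$ yields the same leading Christoffel asymptotics as $\mu$. Dividing the exact partition of unity \eqref{eq:general} by $S_n$ gives
\[
   \sum_{g\in G}\frac{s(n-t_g)}{S_n}\left[\,g(\x)\,\frac{\Lambda^{g\cdot\bphi^*_{2n}}_{n-t_g}(\x)^{-1}}{s(n-t_g)}\right]=1,
\]
with every bracket nonnegative, while the identical weighted sum for $\mu$ equals $p_n(\x)\to1$. Subtracting and using $p_n\to1$ shows that the \emph{weighted sum} of the termwise differences tends to $0$ uniformly on $K$. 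To pin down each term I would first identify $\mu$ as the weak-$\star$ limit of $\bphi^*_{2n}$: accumulation points exist and are measures on $\Omega$ under Assumption \ref{ass:1} (by \cite{cras}), and $\mu$ is feasible for \eqref{primal} while the maximiser of the $\log\det$ functional is the extremal measure characterising the equilibrium (transfinite-diameter) problem, which forces every such accumulation point to equal $\mu$. I would then use the flatness of the $\log\det$ maximiser — an AM--GM/maximum-entropy balancing of the factors $\det\M_{n-t_g}(g\cdot\bphi^*_{2n})$, reflecting the near-optimality of $\mu$ that follows from $p_n\to1$ — to promote the vanishing of the aggregate to vanishing of each individual term.

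The main obstacle is exactly this last promotion: the aggregate identity only controls the \emph{weighted sum} of the normalized optimal Christoffel functions, so a priori the individual differences could oscillate in sign. The mechanism I expect to use is the strict concavity of $\bphi\mapsto\sum_{g}\log\det\M_{n-t_g}(g\cdot\bphi)$, by which near-equality of objective values (available once $\mu$ is shown to be asymptotically optimal) forces $\bphi^*_{2n}$ and $\mu$ to be close. The genuine difficulty is converting that closeness — available naturally only at the level of moments, i.e. weak-$\star$ — into the pointwise, same-degree control of $\Lambda^{g\cdot\bphi^*_{2n}}_{n-t_g}$ demanded here. Controlling this ``diagonal'' Christoffel behaviour of a \emph{moving} sequence of pseudo-moments, rather than of a single fixed measure, is the principal hurdle, and is where the localization results used for $\mu$ do not transfer automatically.
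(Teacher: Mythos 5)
Your treatment of \eqref{cor1:1} and of the second equality in \eqref{cor1-2} is correct and is essentially the paper's own argument: the Kro\'o--Lubinsky comparison gives $s(n)\Lambda^{g\cdot\mu}_n(\x)\to g(\x)$ uniformly on compact subsets of $\mathrm{int}(\Omega)$, the shift from degree $n$ to $n-t_g$ is harmless since $s(n-t_g)/s(n)\to 1$, and $p_n$ is then a convex combination of quantities each tending to $1$. Nothing to add there.

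The genuine gap is the first equality in \eqref{cor1-2}, which you explicitly leave open. Your proposed route --- identify $\mu$ as the weak-$\star$ limit of $\bphi^*_{2n}$ and then invoke strict concavity of the $\log\mathrm{det}$ objective --- is not carried out, and it rests on an identification (that every accumulation point of $\bphi^*_{2n}$ is the equilibrium measure) that the paper itself poses as an open question rather than a known fact; moreover, as you concede, weak-$\star$ closeness of moment vectors does not control the diagonal Christoffel behaviour at the \emph{same} degree $n$. So as written the proposal does not prove the statement. For comparison, the paper closes this step by an elementary squeeze with no weak-$\star$ or concavity input: every term of \eqref{eq:general} is nonnegative on $\Omega$, so each single term $g(\x)\Lambda^{g\cdot\bphi^*_{2n}}_{n-t_g}(\x)^{-1}$ is bounded above by the total $\sum_{g\in G}s(n-t_g)$, and this upper bound is played against the termwise lower bound $(1-\varepsilon)s(n-t_g)$ already established for the $\mu$-terms. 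Be aware, though, that your worry about sign oscillation is substantive: the paper's squeeze uses the normalization $\bigl(\sum_{g\in G}s(n-t_g)\bigr)/s(n-t_g)\le 1+\varepsilon$, which holds only when $G$ is a singleton (the ratio tends to $|G|$), and for $|G|\ge 2$ the identity \eqref{eq:general} together with termwise nonnegativity genuinely does not pin down the individual terms (one term could be $0$ and another could absorb the whole sum). So the individual convergence requires an input beyond the aggregate identity, and this is precisely the point at which both your sketch and the paper's write-up are incomplete; your proposal at least names the obstruction honestly, but it does not overcome it.
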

\begin{proof}
 As $g>0$ on $\mathrm{int}(\Omega)$, by a result of Kro\'o and Lubinsky \cite{Lubinsky} (see also \cite[Theorem 4.4.1, p. 52]{book}), 
 and using our assumption on $\mu$,
 \[\lim_{n\to\infty} s(n)\,\Lambda_n^{g\cdot\mu}(\x)\,=\,g(\x)
 \]
uniformly on compact subsets of $\mathrm{int}(\Omega)$. Rewriting \eqref{eq:weak} as
\[p_n(\x)\,=\,\sum_{g\in G}\frac{g(\x)}{s(n-t_g)\,\Lambda^{g\cdot\mu}_{n-t_g}(\x)}\cdot
\frac{s(n-t_g)}{\sum_g s(n-t_g)}\,,\quad\forall \x\in\R^d\,,\]
and taking limit as $n$ grows when $\x\in\mathrm{int}(\Omega)$, we obtain \eqref{cor1:1}.

Next, let $\bphi^*_{2n}$ be an optimal solution of \eqref{primal}. 
Let $\Delta\subset\mathrm{int}(\Omega)$ be an arbitrary compact subset, and with $\varepsilon>0$ fixed, let $n$ be large enough so that $G_n=G$ and

$\bullet$  $(\sum_{g\in G}s(n-t_g))/s(n-t_g)\leq 1+\varepsilon$ for all $g\in G$, 

$\bullet$ $1-\varepsilon\leq g(\x)\frac{\Lambda^{g\cdot\mu}_{n-t_g}(\x)^{-1}}{\sum_{g\in G}s(n-t_g)}\leq 1+\varepsilon$. 

\noindent
Then by \eqref{eq:general},
$0\leq g(\x)\Lambda^{g\cdot\bphi^*_{2n}}_n(\x)^{-1}/s(n-t_g)\leq (1+\varepsilon)$, for all $g\in G$, and all $\x\in\Delta$, so that
for every $\x\in \Delta$ and every $g\in G$,
\[-2\varepsilon\,\leq\,\frac{g(\x)\,(\Lambda^{g\cdot\bphi^*_{2n}}_{n-t_g}(\x)^{-1}-\Lambda^{g\cdot\mu}_{2n}(\x)^{-1})}{s(n-t_g)}\,\leq\,2\varepsilon\,.\]
As $\varepsilon>0$ was arbitrary, one obtains
\[\lim_{n\to\infty}\,g(\x)\,\frac{\Lambda^{g\cdot\bphi^*_{2n}}_{n-t_g}(\x)^{-1}}{s(n-t_g)}\,=\,\lim_{n\to\infty}\,
g(\x)\,\frac{\Lambda^{g\cdot\mu}_{n-t_g}(\x)^{-1}}{s(n-t_g)}\,=\,1\,,\quad\forall \x\in\Delta\,,\:\forall g\in G\,.\]
\end{proof}
So, under the assumptions in Corollary \ref{cor1}, the linear functional $\bphi^*_{2n}$, unique optimal solution of \eqref{primal}, 
behaves asymptotically like the equilibrium measure of $\Omega$, as $n$ grows. Thus, for instance, for the unit cube with the set of generators 
$G=\{1,1-x_1^2,\ldots,1-x_d^2\}$, $\bphi^*_{2n}$ cannot be the vector of moments up to degree $2n$ of the equilibrium
 measure $\mu$. However:
\[    \lim_{n\to\infty}\frac{\Lambda^{\bphi^*_{2n}}_n(\x)^{-1}}{s(n)}
    \,=\,\lim_{n\to\infty}\frac{\Lambda^{\mu}_n(\x)^{-1}}{s(n)}\,=\,1\,,\quad\forall \x\in\mathrm{int}([-1,1]^d)\,,\]
    and for every $i=1,\ldots,d$,
 \[\lim_{n\to\infty}(1-x_i^2)\,\frac{\Lambda^{(1-x_i^2)\cdot\bphi^*_{2n}}_{n-1}(\x)^{-1}}{s(n-1)}\,=\,\lim_{n\to\infty}(1-x_i^2)\,\frac{\Lambda^{(1-x_i^2)\cdot\mu}_{n-1}(\x)^{-1}}{s(n-1)}\,=\,1\,,\]
 for all $\x\in\mathrm{int}([-1,1]^d)$.

\section{Proofs} \label{proofs}
\setcounter{equation}{0}
Our proof depends heavily on orthogonal polynomials on a family of weight functions associated with the Chebyshev
weight. To avoid confusion, we shall adopt notations that make it transparent how kernels depend on weight 
functions. More precisely, let $W$ be a weight function on a domain $\Omega$; we consider orthogonal polynomials 
with respect to the inner product
$$
     \la f, g\ra  = \int_{\Omega} f(\x) g(\x) W(\x)\, \d \x\,.
$$
We then denote the space $\CV_n^d$ of orthogonal polynomials of degree $n$ by $\CV_n(W; \Omega)$. Then 
$$
    \dim \Pi_n^d  = \binom{n+d}{n}\quad \hbox{and}\quad \dim \CV_n(W; \Omega) = \binom{n+d-1}{n}.
$$
Let $P_n(W; \cdot,\cdot)$ be the reproducing kernel of $\CV_n(W,\Omega)$. If $\{P_\alpha^n: |\alpha| = n, \alpha \in \NN_0^d\}$
is an orthonormal basis of $\CV_n(W, \Omega)$, then the reproducing kernel of $\CV_n(W,\Omega)$ is 
$$
  P_n(W; \x, \y) = \sum_{|\alpha| = n} P_\alpha^n(\x) P_\alpha^n(\y). 
$$
Moreover, the reproducing kernel of $\Pi_n^d$ in $L^2(W, \Omega)$ is denoted by 
$$
  K_n(W; \x,\y) = \sum_{k=0}^n P_k(W; \x, \y). 
$$

\subsection{Unit ball}
The proof of Theorem \ref{th-ball} relies on the addition formula \eqref{intro:addtion} for the spherical harmonics 
mentioned in the introduction. Spherical harmonics are the restrictions of harmonic polynomials on the unit sphere and 
they are known to be orthogonal on the unit sphere. Let $\CH_n^{d+1}$ be the space of spherical harmonics of degree 
$n$ in $d + 1$ variables. For $n \in \NN_0$, let $\{Y_\ell^n: 1 \le \ell \dim \CH_n^{d+1}\}$ be an orthonormal basis 
of $\CH_n^{d+1}$, so that 
$$
    \frac{1}{\s_d} \int_{\SS^d} Y_{\ell}^n (\bxi) Y_{\ell'}^{m}(\bxi)\, \d \s(\bxi) = \delta_{\ell, \ell'} \delta_{n,m},
$$ 
where $\s_d$ denote the surface area of $\SS^d$ and $\d \s$ denote the Lebesgue measure 
on $\SS^d$. We denote by $P_n(\d\s; \cdot,\cdot)$ the reproducing kernel of $\CH_n^{d+1}$, which then satisfies 
the {\it addition formula}
\begin{equation} \label{eq:additionSph}
   Y_n(\d\s; \x,\y) = \sum_{\ell=1}^{\dim \CH_n^{d+1}} Y_{\ell}^n(\x) Y_\ell^n(\y) = Z_n^{\frac{d-1}2}(\la \x, \y\ra), 
   \quad \x, \y \in \SS^d,
\end{equation}
where $Z_n^\l$ is a multiple of the Gegenbauer (ultraspherical) polynomial, 
$$
  Z_n^\l(t)  = \frac{n+\l}{\l} C_n^{\l}(t). 
$$
In particular, setting $\y = \x$ in the addition formula, we obtain
\begin{equation} \label{eq:PellSph}
    \sum_{\ell} |Y_{\ell}^n(\x)|^2 = \frac{2n+d-1}{d-1} C_n^{\frac{d-1}{2}}(1) =  \binom{n+d-1}{d-1} +  \binom{n+d-2}{d-1},
\end{equation}
which follows from $C_n^\l(1) = \Gamma(n+2\l)/(n! \Gamma(2\l))$ and a simple verification.  

Taking a cue of \eqref{eq:additionSph}, for any domain $\Omega$ with weight function $W$, an addition 
formula for the reproducing kernel of $\CV_n(W; \Omega)$ is a closed-form formula 
for $P_n(W; \cdot,\cdot)$. 

We now turn our attention to the unit ball $\BB^d$. The classical orthogonal polynomials on $\BB^d$ are associated 
with the weight function $ (1-\|\x\|^2)^{\alpha}$, $\alpha > -1$. The normalization constant $c_\alpha$ of this weight 
function, so that $c_\alpha (1-\|x\|^2)^\alpha$ is a probability measure, is given by
$$
  c_\alpha = \frac{\Gamma(\alpha + \frac{d+2}{2})} {\pi^{\f{d}{2}} \Gamma(\alpha+1)}. 
$$ 
In the following, we denote by
$$
 W_{-\f12}(\x) = c_{-\f12} (1-\|\x\|^2)^{-\f12} \quad \hbox{and} \quad  W_{\f12}(\x) = c_{-\f12} (1-\|\x\|^2)^{\f12}. 
$$
Notice that $W_{-\f12}$ is a probability measure on $\BB^d$, but  $W_{\f12}$ is not. The spherical harmonics are closely
 related to orthogonal polynomials associated with $W_{-\f12}$ and $W_{\f12}$ on the unit ball.  In particular, we have the 
 following addition formula: 

\begin{prop}
Let $X= (\x,\sqrt{1-\|\x\|^2})$, $\x \in \BB^d$, and $\x \in \SS^{d+1}$. Then
\begin{equation}\label{eq:additionBall}
   Y_n(\d\s; X,Y) =  P_n\left(W_{-\f12}; \x, \y\right)+ \sqrt{1-\|\x\|^2} \sqrt{1-\|\y\|^2} P_{n-1}\left(W_{\f12}; \x, \y\right).
\end{equation}
\end{prop}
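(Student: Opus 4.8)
The plan is to exploit the classical passage between spherical harmonics on $\SS^d\subset\R^{d+1}$ and orthogonal polynomials on the ball $\BB^d$, obtained by separating the parity in the last coordinate. Write a point of $\SS^d$ as $X=(\x,x_{d+1})$, with $Y=(\y,y_{d+1})$ analogously, and decompose $\CH_n^{d+1}=\CH_{n,e}^{d+1}\oplus\CH_{n,o}^{d+1}$ into the harmonics that are even, respectively odd, in $x_{d+1}$. Because the Laplacian $\Delta_{d+1}=\Delta_d+\partial_{x_{d+1}}^2$ preserves parity in $x_{d+1}$, each summand again consists of harmonic polynomials, and the two summands are $L^2(\d\s)$-orthogonal since $\SS^d$ is invariant under $x_{d+1}\mapsto-x_{d+1}$.

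On $\SS^d$ one has $x_{d+1}^2=1-\|\x\|^2$, so an even harmonic $Y$ of degree $n$ restricts to a polynomial $p(\x)$ of degree $n$ in $\x$ alone, whereas an odd harmonic restricts to $x_{d+1}\,s(\x)=\sqrt{1-\|\x\|^2}\,s(\x)$ with $\deg s=n-1$. Writing the surface integral as twice the integral over the upper hemisphere and transporting it to $\BB^d$ through $X\mapsto\x$, whose Jacobian contributes the factor $(1-\|\x\|^2)^{-1/2}$, one finds
\begin{align*}
 \frac{1}{\s_d}\int_{\SS^d}Y\,Y'\,\d\s
   &=\frac{2}{\s_d}\int_{\BB^d}p\,p'\,(1-\|\x\|^2)^{-1/2}\,\d\x
     \qquad(\text{$Y,Y'$ even}),\\
 \frac{1}{\s_d}\int_{\SS^d}Y\,Y'\,\d\s
   &=\frac{2}{\s_d}\int_{\BB^d}s\,s'\,(1-\|\x\|^2)^{1/2}\,\d\x
     \qquad(\text{$Y,Y'$ odd}).
\end{align*}
The decisive normalization check is the identity $\tfrac{2}{\s_d}=c_{-\f12}$, which follows from $\s_d=2\pi^{(d+1)/2}/\Gamma(\tfrac{d+1}{2})$ together with the stated value of $c_\alpha$; it guarantees that the two restriction maps carry $L^2(\d\s/\s_d)$-orthonormal harmonics to polynomials orthonormal with respect to $W_{-\f12}$ and $W_{\f12}$, respectively.

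I would then pin down the images of the two maps. Since harmonics of distinct degrees are $\d\s$-orthogonal, the restrictions of $\CH_{j,e}^{d+1}$ for varying $j$ are mutually $W_{-\f12}$-orthogonal, each lying in the space of polynomials in $\x$ of degree $\le j$; moreover the even harmonics of degree $\le n$ restrict to span all polynomials in $\x$ of degree $\le n$ on $\BB^d$, being the even-in-$x_{d+1}$ parts of the restrictions of all harmonics of degree $\le n$. An easy induction on the degree then forces the image of $\CH_{n,e}^{d+1}$ to be exactly $\CV_n(W_{-\f12};\BB^d)$. Stripping off the common factor $x_{d+1}$ and repeating the argument identifies the image of $\CH_{n,o}^{d+1}$ with $\CV_{n-1}(W_{\f12};\BB^d)$; the bookkeeping is confirmed by $\dim\CH_n^{d+1}=\binom{n+d-1}{d-1}+\binom{n+d-2}{d-1}=\dim\CV_n(W_{-\f12};\BB^d)+\dim\CV_{n-1}(W_{\f12};\BB^d)$.

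Finally, since reproducing kernels do not depend on the choice of orthonormal basis, I would evaluate $Y_n(\d\s;X,Y)=\sum_\ell Y_\ell^n(X)Y_\ell^n(Y)$ on a basis of $\CH_n^{d+1}$ adapted to the splitting. The even block contributes $\sum_\ell p_\ell(\x)p_\ell(\y)=P_n(W_{-\f12};\x,\y)$, while the odd block contributes $x_{d+1}y_{d+1}\sum_\ell s_\ell(\x)s_\ell(\y)=\sqrt{1-\|\x\|^2}\sqrt{1-\|\y\|^2}\,P_{n-1}(W_{\f12};\x,\y)$, and their sum is precisely \eqref{eq:additionBall}. I expect the crux to be the third step: verifying that each parity block restricts \emph{onto} the full orthogonal polynomial space of the correct degree rather than merely \emph{into} it, which rests on the orthogonality-across-degrees argument together with the exact constant $\tfrac{2}{\s_d}=c_{-\f12}$ that makes the kernel identity hold on the nose rather than up to a scalar.
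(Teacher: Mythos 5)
Your proposal is correct and follows essentially the same route as the paper: both rest on the even/odd-in-$x_{d+1}$ correspondence between $\CH_n^{d+1}$ and $\CV_n(W_{-\f12};\BB^d)\oplus x_{d+1}\CV_{n-1}(W_{\f12};\BB^d)$, the integral identity transporting $\d\s$ to $\d\x/\sqrt{1-\|\x\|^2}$, the normalization $2/\s_d=c_{-\f12}$, and the basis-independence of the reproducing kernel. The only difference is that you derive the basis correspondence from the parity decomposition, whereas the paper simply cites the explicit basis $Y_\nu^{(1)}=P_\nu^n(W_{-\f12},\x)$, $Y_\nu^{(2)}=x_{d+1}P_\nu^{n-1}(W_{\f12},\x)$ from Dunkl--Xu and then checks the norms.
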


\begin{proof}
Let $\{P_\nu^n(W_\mu; \x): |\nu| = n, \nu \in \NN_0^d\}$ denote an orthogonal basis of $\CV_n^d(W_\mu)$. 
By \cite[Theorem 4.2.4]{DX}, it follows that an orthogonal basis for the space $\CH_n^{d+1}$ of spherical harmonics 
consists of 
\begin{align*}
  Y_\nu^{(1)} (\x, x_{d+1}) \, &= P_\nu^n(W_{-\f12},\x), \quad |\nu|=n,  \\
    Y_\nu^{(2)} (\x, x_{d+1}) \, & = x_{d+1} P_\nu^{n-1}(W_{\f12},\x), \quad |\nu|=n-1, 
\end{align*}
where $\x \in \BB^d$, $(\x,x_{d+1}) \in \SS^d$. Using the integral identity \cite[Lemma 4.2.3]{DX}, 
$$
  \int_{\SS^d} f(\y) \d \s(\y) = \int_{\BB^d} \left[ f\left(\x,\sqrt{1-\|\x\|^2}\right)+ f\left(\x,\sqrt{1-\|\x\|^2}\right)\right] \frac{\d \x}{\sqrt{1-\|\x\|^2}},
$$
it follows readily that $c_{-\f12}/2 = 1/ \omega_d$ is the surface area of $\SS^d$ and, consequently, 
\begin{align*}
  \| Y_\nu^{(1)}\|_{L^2(\d \s)}^2 = \|P_\nu^n(W_{-\f12})\|_{L^2(W_{-\f12})}^2  
\end{align*}
Moreover, we also obtain 
\begin{align*}
   \| Y_\nu^{(2)}\|_{L^2(\d \s)}^2 \,& = \frac{1}{\omega_d} \int_{\SS^d}  \left |x_{d+1} P_\nu^{n-1}(W_{\f12},\x)\right|^2 \d \s \\
   & = c_{-\f12} \int_{\BB^d} \left|P_\nu^{n-1}(W_{\f12},\x)\right|^2 \sqrt{1-\|\x\|^2} \d \x =
      \left\|P_\nu^{n-1}(W_{\f12})\right\|_{L^2(W_{\f12})}^2. 
\end{align*}
Consequently, since the reproducing kernel can be written as the sum of products of orthonormal polynomials, 
it follows that $ Y_n(\d\s; X,Y)$ is equal to 
\begin{align*}
  &   \sum_{|\nu|=n} \frac{ P_\nu^{n}(W_{-\f12},\x)P_\nu^{n}(W_{-\f12},\y)}{ \|P_\nu^n(W_{-\f12})\|_{L^2(W_{-\f12})}^2}
     + \sum_{|\nu|=n-1} \frac{ x_{d+1} y_{d+1} P_\nu^{n-1}(W_{\f12},\x)P_\nu^{n-1}(W_{\f12},\y)}{  \|P_\nu^{n-1}(W_{\f12})\|_{L^2(W_{\f12})}^2}\\
   &  =  P_n\left(W_{-\f12}; \x, \y\right)+  \sqrt{1-\|\x\|^2} \sqrt{1-\|\y\|^2} P_{n-1}\left(W_{\f12}; \x, \y\right).
\end{align*}
This completes the proof. 
\end{proof}

\begin{rem}
As an illustration for the Remark \ref{rem:3.1}, we note that if we replace $W_{\f12}$ by the probability measure 
$\wh W_{\f12}(x)= c_{\f12} (1-\|x\|^2)^{\f12}$, then 
\begin{align*}
   \| Y_\nu^{(2)}\|_{L^2(\d \s)}^2 \,& = \frac{1}{\omega_d} \int_{\SS^d}  \left |x_{d+1} P_\nu^{n-1}(\wh W_{\f12},\x)\right|^2 \d \s \\
   & = \frac{c_{-\f12}}{c_{\f12}}  \int_{\BB^d} \left|P_\nu^{n-1}(\wh W_{\f12},\x)\right|^2 \wh W_{\f 12}(x) \d \x =
     \frac{1}{d+1}  \left\|P_\nu^{n-1}(\wh W_{\f12})\right\|_{L^2(\wh W_{\f12})}^2.
\end{align*}
Hence, following the proof of the above proposition, we obtain the identity 
\begin{equation*}
   Y_n(\d\s; X,Y) =  P_n\left(W_{-\f12}; \x, \y\right)+ (d+1) \sqrt{1-\|\x\|^2} \sqrt{1-\|\y\|^2} P_{n-1}\left(\wh W_{\f12}; \x, \y\right).
\end{equation*}
Notice the additional $(d+1)$ in this identity in comparison with that of \eqref{eq:additionBall}
\end{rem}

Setting $Y =X$ in the formula that we just proved and applying \eqref{eq:PellSph} leads to the following
Pell identity on the unit ball.

\begin{cor}
For $d \ge 1$, $n=0,1,2,\ldots$ and $\x \in \BB^d$, 
\begin{equation} \label{eq:reprod:ball}
 P_n\left(W_{-\f12}; \x, \x\right)+ (1-\|\x\|^2) P_{n-1}\left(W_{\f12}; \x, \x\right) =  \binom{n+d-1}{d-1} +  \binom{n+d-2}{d-1} 
 \end{equation}
and, summing up the identity, 
\begin{equation} \label{eq:reprod2:ball}
 K_n\left(W_{-\f12}; \x, \x\right)+ (1-\|x\|^2) K_{n-1}\left(W_{\f12}; \x, \x\right) 
     =  \binom{n+d}{d} +  \binom{n+d-1}{d}\,.
\end{equation}
\end{cor}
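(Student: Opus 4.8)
The plan is to obtain both identities directly from the addition formula \eqref{eq:additionBall} that the preceding proposition has just established, with no new machinery required. First I would specialize \eqref{eq:additionBall} to the diagonal by setting $Y=X$, equivalently $\y=\x$. On the right-hand side the cross factor $\sqrt{1-\|\x\|^2}\,\sqrt{1-\|\y\|^2}$ collapses to $(1-\|\x\|^2)$, producing exactly the left-hand side of \eqref{eq:reprod:ball}. On the left-hand side, $Y_n(\d\s; X,X)=\sum_{\ell}|Y_\ell^n(X)|^2$ is the diagonal value of the spherical-harmonic reproducing kernel at the point $X=(\x,\sqrt{1-\|\x\|^2})$, which lies on $\SS^d$ since $\|X\|^2=\|\x\|^2+(1-\|\x\|^2)=1$. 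Hence \eqref{eq:PellSph} applies verbatim and evaluates this diagonal to $\binom{n+d-1}{d-1}+\binom{n+d-2}{d-1}$, which yields \eqref{eq:reprod:ball}.

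For the summed identity \eqref{eq:reprod2:ball}, I would sum \eqref{eq:reprod:ball} over the degrees $0,1,\ldots,n$. By the definition $K_n(W;\x,\x)=\sum_{k=0}^n P_k(W;\x,\x)$ recalled at the start of the section, the first sum telescopes to $K_n(W_{-\f12};\x,\x)$; the second, after pulling out the common factor $(1-\|\x\|^2)$ and using the convention $P_{-1}\equiv 0$, telescopes to $(1-\|\x\|^2)\,K_{n-1}(W_{\f12};\x,\x)$. The right-hand side becomes $\sum_{k=0}^n\bigl[\binom{k+d-1}{d-1}+\binom{k+d-2}{d-1}\bigr]$, and I would evaluate each piece by the hockey-stick identity $\sum_{k=0}^n\binom{k+r}{r}=\binom{n+r+1}{r+1}$. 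With $r=d-1$ the first piece gives $\binom{n+d}{d}$; the second piece, whose $k=0$ term $\binom{d-2}{d-1}$ vanishes, reindexes to $\sum_{k=0}^{n-1}\binom{k+d-1}{d-1}=\binom{n+d-1}{d}$. Adding the two pieces produces the stated right-hand side of \eqref{eq:reprod2:ball}.

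There is essentially no genuine obstacle here: once the proposition and \eqref{eq:PellSph} are in hand, the corollary is a one-line diagonal specialization together with a standard combinatorial summation. The only points deserving minor care are checking that $X$ really lands on $\SS^d$ (so that \eqref{eq:PellSph} is applicable) and bookkeeping the shifted index in the $W_{\f12}$ term, including the vanishing $k=0$ contribution $\binom{d-2}{d-1}=0$ in the second binomial sum and the $P_{-1}\equiv 0$ convention that aligns the shifted kernel with $K_{n-1}$.
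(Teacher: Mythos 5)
Your proposal is correct and follows exactly the paper's own route: the paper likewise obtains \eqref{eq:reprod:ball} by setting $Y=X$ in the addition formula \eqref{eq:additionBall} and invoking \eqref{eq:PellSph}, then derives \eqref{eq:reprod2:ball} by summing over degrees. The only difference is that you spell out the hockey-stick evaluation of the binomial sums, which the paper leaves implicit; your bookkeeping there is accurate.
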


The identity \eqref{eq:reprod:ball} is exactly the same as \eqref{th-ball-1} in Theorem \ref{th-ball}, whereas \eqref{eq:reprod2:ball}
is equivalent to \eqref{th-ball-2} by \eqref{ortho-poly-0}.

\subsection{Simplex}
The classical orthogonal polynomials on $\triangle^d$ are associated with the weight function 
$$
  W_{\bka}(\x) =  x_1^{\k_1} \cdots x_d^{\k_d} (1-|\x|)^{\k_{d+1}}, \qquad \x \in \triangle^d, \quad \k_i > -1. 
$$
The normalization constant $c_{\bka}$ that makes this weight function a probability measure is given by
$$
  c_{\bka} = \frac{\Gamma(|\bka|+d+1)}{\Gamma(\k_1+1) \cdots \Gamma(\k_{d+1}+1)}.
$$ 
For $\one = (1,\ldots,1) \in \RR^{d+1}$, we define the modified weight function 
$$
 W_{\bka}^\triangle(x) = c_{-\f \one 2} W_{\bka}(x).
$$
By this definition, the weight function $W_{-\one/2}$ is normalized to have the unit integral on $\triangle^d$ but $W_{\bka}$
for $\ve \ne -  \one/ 2$ is not. 

The classical orthogonal polynomials for $W_{\bka}^\triangle$ and those on the ball are closely related, as seen in \cite[Theorem 4.4.4]{DX}. Also, the following lemma is known in principle, but we need its precise form and the norm identities.

 \begin{lem}\label{lem:simplex}
 Let $\{P_{\boldsymbol{\nu}}^{\bka}: |\boldsymbol{\nu}| = n\}$ denote an orthogonal basis for $\CV_n(W_{\bka}^\triangle; \triangle^d)$.
 Let $\ve \in \{0,1\}^d$, and define, with $\bka = (\bka', \k_{d+1})$ and $\bka' \in \RR^d$, 
 $$
  \x\mapsto Q_{\boldsymbol{\nu},\ve}^{\bka} (\x) \,:=\, \x^\ve\, P_{\boldsymbol{\nu}}^{{\bka'+\ve,\k_{d+1}}} \left(x_1^2, \ldots, x_d^2\right), 
      \quad \boldsymbol{\nu} \in \NN_0^d.
$$
Let $\mathbf{1} = (1,\ldots,1) \in \RR^{d}$. Then 
$\{Q_{\boldsymbol{\nu},\ve}^{(-\mathbf{1}/2, \alpha)}: \ve \in \{0,1\}^d, \, |\boldsymbol{\nu}| =
 \frac{n-|\ve|}{2} \in \NN_0, \, \boldsymbol{\nu} \in \NN_0^d\}$ is an orthogonal basis for $\CV_n(W_{\alpha}, \BB^d)$ and,
 moreover,
 $$
   \left \| Q_{\boldsymbol{\nu},\ve}^{ (-\mathbf{1}/2, \alpha)} \right\|_{L^2(W_\alpha^\BB)}^2 = 
           \left \| P_{\boldsymbol{\nu}}^{{ (-\mathbf{1}/2+\ve,\alpha)}}  \right\|_{L^2(W_{-\mathbf{1}/2+\ve,\alpha}^\triangle)}^2.
 $$
\end{lem}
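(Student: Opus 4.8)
The plan is to run everything through the quadratic change of variables $\x\mapsto\u=(x_1^2,\dots,x_d^2)$, which maps $\BB^d$ onto $\triangle^d$ and is the mechanism behind the ball--simplex correspondence of \cite[Theorem 4.4.4]{DX}. The single computation driving the lemma is the integral identity: for any $F$ even in each coordinate,
\begin{equation*}
  \int_{\BB^d} F(\x)\,\d\x = \int_{\triangle^d} F\bigl(\sqrt{u_1},\dots,\sqrt{u_d}\bigr)\,\prod_{i=1}^d u_i^{-1/2}\,\d\u,
\end{equation*}
obtained by splitting $\BB^d$ into its $2^d$ orthants and substituting $u_i=x_i^2$ on the positive one (the factor $2^{-d}$ from $\d x_i=\tfrac12 u_i^{-1/2}\d u_i$ cancels the $2^d$). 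Applied to $F(\x)=\x^{2\ve}P_{\boldsymbol{\mu}}(\x^2)P_{\boldsymbol{\nu}}(\x^2)(1-\|\x\|^2)^\alpha$, it converts a weighted ball product into
\begin{equation*}
  \int_{\triangle^d} P_{\boldsymbol{\mu}}(\u)P_{\boldsymbol{\nu}}(\u)\,\prod_{i=1}^d u_i^{\ve_i-1/2}\,(1-|\u|)^\alpha\,\d\u,
\end{equation*}
in which one reads off the simplex weight with parameters $\k_i=\ve_i-\tfrac12$ $(i\le d)$ and $\k_{d+1}=\alpha$, i.e. $W_{-\one/2+\ve,\alpha}$.

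First I would record the bookkeeping of degrees: since $P_{\boldsymbol{\nu}}$ has degree $|\boldsymbol{\nu}|$, one has $\deg Q_{\boldsymbol{\nu},\ve}=|\ve|+2|\boldsymbol{\nu}|$, so the constraint $|\boldsymbol{\nu}|=(n-|\ve|)/2$ is exactly what forces $\deg Q_{\boldsymbol{\nu},\ve}=n$. Counting admissible pairs $(\ve,\boldsymbol{\nu})$ via the bijection $\boldsymbol{\beta}\mapsto(\ve,\boldsymbol{\nu})$ with $\boldsymbol{\beta}=\ve+2\boldsymbol{\nu}$ between degree-$n$ monomials and such pairs gives their number as $\binom{n+d-1}{d-1}=\dim\CV_n(W_\alpha,\BB^d)$.

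Next comes orthogonality. For $\ve\neq\ve'$ the product $Q_{\boldsymbol{\mu},\ve}Q_{\boldsymbol{\nu},\ve'}$ carries the factor $\x^{\ve+\ve'}$, which is odd in some coordinate, while the weight and the factors $P(\x^2)$ are even, so the ball integral vanishes by symmetry. For $\ve=\ve'$ the displayed change of variables identifies $\langle Q_{\boldsymbol{\mu},\ve},Q_{\boldsymbol{\nu},\ve}\rangle_{L^2(W_\alpha^\BB)}$ with $\langle P_{\boldsymbol{\mu}},P_{\boldsymbol{\nu}}\rangle_{L^2(W_{-\one/2+\ve,\alpha}^\triangle)}$, which vanishes for $\boldsymbol{\mu}\neq\boldsymbol{\nu}$ because $\{P_{\boldsymbol{\nu}}\}$ is orthogonal on the simplex. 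To show the $Q_{\boldsymbol{\nu},\ve}$ actually lie in $\CV_n$ (not merely that they are mutually orthogonal), I would check directly that each is orthogonal to $\Pi_{n-1}^d$: writing $p\in\Pi_{n-1}^d$ in its parity components $p=\sum_{\boldsymbol{\delta}}\x^{\boldsymbol{\delta}}p_{\boldsymbol{\delta}}(\x^2)$, only the $\boldsymbol{\delta}=\ve$ term survives the pairing, and the change of variables reduces it to $\langle P_{\boldsymbol{\nu}},p_\ve\rangle$ on the simplex; the bound $\deg(\x^\ve p_\ve(\x^2))\le n-1$ forces $\deg p_\ve\le|\boldsymbol{\nu}|-1$, so orthogonality of $P_{\boldsymbol{\nu}}\in\CV_{|\boldsymbol{\nu}|}(W^\triangle)$ to lower degrees finishes it. Combined with the count, this exhibits $\{Q_{\boldsymbol{\nu},\ve}\}$ as an orthogonal basis of $\CV_n(W_\alpha,\BB^d)$.

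The norm identity is the diagonal case $\boldsymbol{\mu}=\boldsymbol{\nu}$ of the same computation, and here the normalization bookkeeping is the crux and the part I expect to be the main obstacle. Both $W_\alpha^\BB$ and $W_{-\one/2+\ve,\alpha}^\triangle$ are normalized by the \emph{fixed} constants $c_{-1/2}$ and $c_{-\one/2}$ rather than by their own probability constants, so the change of variables leaves the overall factor $c_{-1/2}^\BB/c_{-\one/2}^\triangle$. The decisive check is that these coincide: from $c_\alpha=\Gamma(\alpha+\tfrac{d+2}{2})/(\pi^{d/2}\Gamma(\alpha+1))$ and $c_{\bka}=\Gamma(|\bka|+d+1)/\prod_i\Gamma(\k_i+1)$ one computes $c_{-1/2}^\BB=\Gamma(\tfrac{d+1}{2})/\pi^{(d+1)/2}=c_{-\one/2}^\triangle$. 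With the stray factor equal to $1$, the claimed identity $\|Q_{\boldsymbol{\nu},\ve}^{(-\one/2,\alpha)}\|_{L^2(W_\alpha^\BB)}^2=\|P_{\boldsymbol{\nu}}^{(-\one/2+\ve,\alpha)}\|_{L^2(W_{-\one/2+\ve,\alpha}^\triangle)}^2$ follows. The geometric and orthogonality parts are then routine consequences of the one change-of-variables formula; only the constant matching and the consistent tracking of even/odd parity conventions require genuine care.
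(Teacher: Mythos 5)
Your proof is correct and follows essentially the same route as the paper's: the quadratic change of variables $u_i=x_i^2$ (the paper's citation of [DX, Lemma 4.4.1]), the parity argument for distinct $\ve$, the cardinality count, and the key observation that the two fixed normalizing constants $c_{-1/2}$ (ball, dimension $d$) and $c_{-\one/2}$ (simplex, $d+1$ parameters) coincide, both being $\Gamma(\tfrac{d+1}{2})/\pi^{(d+1)/2}$. The only deviation is that where the paper invokes [DX, Theorem 4.4.4] for the fixed-$\ve$ orthogonality and for membership in $\CV_n(W_\alpha;\BB^d)$, you verify these directly via the parity decomposition of an arbitrary $p\in\Pi_{n-1}^d$ — a self-contained substitute for the citation (and arguably a welcome one, since mutual orthogonality plus a dimension count alone would not establish orthogonality to lower degrees), not a different method.
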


\begin{proof}
For $\ve \ne \ve'$, the orthogonality of $Q_{\boldsymbol{\nu},\ve}^{(-\mathbf{1}/2, \alpha)}$ and 
$Q_{\boldsymbol{\nu}',\ve'}^{- (\mathbf{1}/2, \alpha)}$ follows from the parity and the invariance of $W_\alpha$ under 
sign changes. For fixed $\ve$, the orthogonality of 
$\{Q_{\boldsymbol{\nu},\ve}^{- (\mathbf{1}/2, \alpha)}: \boldsymbol{\nu} \in \NN_0^d\}$ follows from \cite[Theorem 4.4.4]{DX}.
Furthermore, it is not difficult to verify that the cardinality of $\{Q_{\boldsymbol{\nu},\ve}^{- (\mathbf{1}/2, \alpha)}: \ve \in \{0,1\}^d, \, |\boldsymbol{\nu}| = \frac{n-|\ve|}{2} \in \NN_0, 
\, \boldsymbol{\nu} \in \NN_0^d\}$ is equal to $\dim \CV_n(W_\alpha; \BB^d)$, so that the set is an orthogonal basis. Moreover,
using the integral identity (\cite[Lemma 4.4.1]{DX})
$$
  \int_{\BB^d} f(y_1^2,\ldots, y_d^2) \d y = \int_{\triangle^d} f(x_1,\ldots, x_d) \frac{\d x }{\sqrt{x_1 \cdots x_d}},
$$
it is easy to verify that $c_{-\one/2, \alpha}$ is the normalization constant of $W_\alpha^\BB$ on the unit ball. Hence,
\begin{align*}
   \left \| Q_{\boldsymbol{\nu},\ve}^{ (-\mathbf{1}/2, \alpha)} \right\|_{L^2(W_\alpha^\BB)}^2 
  \,& = c_{-\one/2, \alpha}  \int_{\BB^d} \left| \x^\ve\, P_{\boldsymbol{\nu}}^{(- \one /2+\ve,\alpha)} \left(x_1^2, \ldots, x_d^2\right) \right|^2
   W_\alpha^\BB(x) \d x \\
  \, &  = c_{-\one/2, \alpha}  \int_{\TT^d} \left| P_{\boldsymbol{\nu}}^{(- \one /2+\ve,\alpha)}(x_1, \ldots, x_d) \right|^2
     W_{-\one/2+\ve, \alpha}^\triangle(x) \d x \\
  \, & =
    \left \| P_{\boldsymbol{\nu}}^{{ (-\mathbf{1}/2+\ve,\alpha)}}  \right\|_{L^2(W_{-\mathbf{1}/2+\ve,\alpha}^\triangle)}^2,
\end{align*}
where the last step follows from our definition of $W_{\bka}^{\vartriangle}$. This completes the proof. 
\end{proof}

\begin{prop}
For $\k_1,\ldots, k_{d+1} > -1$, $\x\in \triangle^d$, 
\begin{align}\label{eq:reprod:simplex}
   \sum_{\ve \in \{0,1\}^{d+1}, \, |\ve| \in 2\NN_0} 
      P_{n - \frac{|\ve|}{2}}\left(W_{-\mathbf{1}/2 + \ve}^\triangle; \x,\x\right) 
      =   \binom{2n+d-1}{d-1} +  \binom{2n+d-2}{d-1}. 
\end{align} 
Moreover, summing up the identity, 
\begin{align}\label{eq:reprod:simplex2}
   \sum_{\ve \in \{0,1\}^{d+1}, \, |\ve| \in 2\NN_0}
         \x^{\ve} K_{n - \frac{|\ve|}{2}}\left(W_{-\mathbf{1}/2 + \ve}^\triangle; \x,\x\right)
   =    \binom{2n+d}{2n}.
\end{align}
\end{prop}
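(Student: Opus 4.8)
The plan is to lift both identities to the unit ball, where the Pell identity \eqref{eq:reprod:ball} is already in hand, and to transport them to $\triangle^d$ through the degree-doubling substitution $u_i=x_i^2$ that underlies Lemma \ref{lem:simplex}. Throughout I write $\u=(x_1^2,\dots,x_d^2)$, which sweeps out $\triangle^d$ as $\x$ runs over $\BB^d$, and abbreviate $W_{\tilde\ve}^\triangle:=W_{-\mathbf 1/2+\tilde\ve}^\triangle$ for $\tilde\ve\in\{0,1\}^{d+1}$. First I would decompose each ball reproducing kernel in \eqref{eq:reprod:ball}: since Lemma \ref{lem:simplex} provides $\{Q_{\boldsymbol\nu,\ve}^{(-\mathbf 1/2,\alpha)}\}$ as an orthogonal basis of $\CV_n(W_\alpha,\BB^d)$ together with the matching of $L^2$-norms, writing the kernel as a sum of products of the corresponding orthonormal polynomials and inserting $Q_{\boldsymbol\nu,\ve}(\x)=\x^\ve P_{\boldsymbol\nu}^{-\mathbf 1/2+\ve,\alpha}(x_1^2,\dots,x_d^2)$ gives, after setting $\y=\x$,
\begin{equation*}
 P_n(W_\alpha;\x,\x)=\sum_{\ve\in\{0,1\}^d,\ |\ve|\equiv n\,(\mathrm{mod}\,2)}\u^\ve\,P_{(n-|\ve|)/2}\!\left(W_{-\mathbf 1/2+\ve,\alpha}^\triangle;\u,\u\right),
\end{equation*}
where $\x^{2\ve}=\u^\ve$ and only $\ve$ with $|\ve|\equiv n$ survive because $Q_{\boldsymbol\nu,\ve}$ has degree $|\ve|+2|\boldsymbol\nu|$. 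As both sides are polynomials in $\u$ that agree on a set of full dimension, this holds for every $\u\in\triangle^d$.

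Next I would specialize \eqref{eq:reprod:ball} to degree $2n$, so that the two ball weights are $W_\alpha$ with $\alpha=-\tfrac12$ and $\alpha=\tfrac12$, and substitute the decomposition above. For $\alpha=-\tfrac12$ at degree $2n$ the surviving $\ve\in\{0,1\}^d$ have $|\ve|$ even and yield the full indices $\tilde\ve=(\ve,0)$, for which $g_{\tilde\ve}(\u)=\u^\ve$ and $(2n-|\ve|)/2=n-|\tilde\ve|/2$; for $\alpha=\tfrac12$ at degree $2n-1$ the surviving $\ve$ have $|\ve|$ odd, the prefactor $1-\|\x\|^2=1-|\u|$ supplies precisely the missing $(d+1)$-st generator, and the indices become $\tilde\ve=(\ve,1)$, for which $g_{\tilde\ve}(\u)=\u^\ve(1-|\u|)$ and $(2n-1-|\ve|)/2=n-|\tilde\ve|/2$. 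In both cases $|\tilde\ve|$ is even, so adding the two contributions reassembles exactly the generator-weighted left-hand side of \eqref{eq:reprod:simplex} (as displayed with the prefactors $g_{\tilde\ve}$ in \eqref{th-simplex-1}), after renaming $\u$ back to $\x$, while the right-hand side is that of \eqref{eq:reprod:ball} at degree $2n$, namely $\binom{2n+d-1}{d-1}+\binom{2n+d-2}{d-1}$.

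For the summed identity \eqref{eq:reprod:simplex2} I would add the first identity over the levels $N=0,1,\dots,n$ and interchange the two summations: for each fixed $\tilde\ve$ the contributions $g_{\tilde\ve}(\u)\,P_{N-|\tilde\ve|/2}(W_{\tilde\ve}^\triangle;\u,\u)$, as $N$ ranges from $|\tilde\ve|/2$ to $n$, collapse through $K_m=\sum_{j=0}^m P_j$ into $g_{\tilde\ve}(\u)\,K_{n-|\tilde\ve|/2}(W_{\tilde\ve}^\triangle;\u,\u)$, which is the left-hand side of \eqref{eq:reprod:simplex2}. On the right-hand side I would use $\binom{2N+d-1}{d-1}+\binom{2N+d-2}{d-1}=\dim\CH_{2N}^{d+1}=\binom{2N+d}{d}-\binom{2N+d-2}{d}$, so that summing over $N$ telescopes to $\binom{2n+d}{d}-\binom{d-2}{d}=\binom{2n+d}{2n}$, as claimed.

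The main obstacle is the bookkeeping of degrees and parities across the substitution $u_i=x_i^2$: one must verify that the degree-doubling relation $\deg Q_{\boldsymbol\nu,\ve}=|\ve|+2|\boldsymbol\nu|$ forces precisely the even total degrees $|\tilde\ve|\in2\NN_0$ on the simplex, that the ball parameter $\alpha\in\{-\tfrac12,\tfrac12\}$ matches the last coordinate $\tilde\ve_{d+1}\in\{0,1\}$, and that the prefactor $1-\|\x\|^2$ is correctly identified with $(1-|\u|)^{\tilde\ve_{d+1}}$. Once this indexing is pinned down, the argument reduces entirely to the ball identity \eqref{eq:reprod:ball}, the norm matching of Lemma \ref{lem:simplex}, and the elementary telescoping above.
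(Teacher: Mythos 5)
Your proposal is correct and follows essentially the same route as the paper: both decompose $P_n(W_\alpha^\BB;\x,\x)$ into simplex kernels via the basis correspondence of Lemma \ref{lem:simplex}, substitute into the ball Pell identity \eqref{eq:reprod:ball} at degree $2n$ with $\alpha=\mp\tfrac12$ matching the last coordinate of $\ve\in\{0,1\}^{d+1}$, and change variables $x_i\mapsto\sqrt{x_i}$, $x_{d+1}=1-|\x|$. Your explicit telescoping via $\binom{2N+d-1}{d-1}+\binom{2N+d-2}{d-1}=\binom{2N+d}{d}-\binom{2N+d-2}{d}$ is just a spelled-out version of the binomial verification the paper leaves to the reader, and you correctly track the generator prefactors $g_{\tilde\ve}$ that the displayed form of \eqref{eq:reprod:simplex} omits but \eqref{th-simplex-1} includes.
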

 
\begin{proof}
Let $\x^2=(x_1^2,\ldots,x_d^2)$. As a consequence of the lemma, we obtain immediately 
\begin{align*}
  P_n(W_{\alpha}^\BB; \x,\y)\, & =  \sum_{\substack{\ve \in \{0,1\}^d \\ \frac{n-|\ve|}2 \in \NN_0}}
   \sum_{ |\boldsymbol{\nu}| =\frac{n-|\ve|}{2} }
    \frac{Q_{\boldsymbol{\nu},\ve}^{ (-\mathbf{1}/2, \alpha)}(\x) Q_{\boldsymbol{\nu},\ve}^{ (-\mathbf{1}/2, \alpha)}(\y)}
     {   \left \| Q_{\boldsymbol{\nu},\ve}^{ (-\mathbf{1}/2, \alpha)} \right\|_{L^2(W_\alpha^\BB)}^2} \\ 
      & =  \sum_{\substack{\ve \in \{0,1\}^d \\ \frac{n-|\ve|}2 \in \NN_0}} 
           \x^\ve \y^\ve
   \sum_{ |\boldsymbol{\nu}| =\frac{n-|\ve|}{2} }
    \frac{ P_{\boldsymbol{\nu}}^{{ (-\mathbf{1}/2+\ve,\alpha)}}(\x^2)  P_{\boldsymbol{\nu}}^{{ (-\mathbf{1}/2+\ve,\alpha)}}(\y^2)}
     { \left \| P_{\boldsymbol{\nu}}^{{ (-\mathbf{1}/2+\ve,\alpha)}}  \right\|_{L^2(W_{-\mathbf{1}/2+\ve,\alpha}^\triangle)}^2} \\ 
   & =  \sum_{\substack{\ve \in \{0,1\}^d \\ \frac{n-|\ve|}2 \in \NN_0}}  
     \x^\ve \y^\ve P_\frac{n-|\ve|}{2}\left(W_{-\mathbf{1}/2+\ve, \alpha}^\triangle; \x^2,\y^2\right), 
\end{align*}
where the sum is over those $\ve \in \{0,1\}^d$ such that $\frac{n-|\ve|}2 \in \NN_0$, which implies, in particular,
$$
  P_n(W_{\alpha}^\BB; \x,\x) =  \sum_{\substack{\ve \in \{0,1\}^d \\ \frac{n-|\ve|}2 \in \NN_0}} 
        \x^{2\ve} P_\frac{n-|\ve|}{2} \left(W_{-\mathbf{1}/2+\ve, \alpha}^\triangle; \x^2,\x^2\right).
$$
Choosing $\alpha = \pm \f12$ and applying the relation \eqref{eq:reprod:ball}, we obtain 
\begin{align*}
&  Z_n^{\frac{d-1}{2}}(1) =
  \sum_{\substack{\ve \in \{0,1\}^d \\ \frac{n-|\ve|}2 \in \NN_0}}  
        \x^{2\ve} P_\frac{n-|\ve|}{2}
      \left(W_{-\mathbf{1}/2+\ve, -\f12}^\triangle; \x^2,\x^2\right) \\
  & +   (1-\|\x\|^2) \sum_{\substack{\ve \in \{0,1\}^d \\ \frac{n-1-|\ve|}2 \in \NN_0}}
       \x^{2\ve} P_\frac{n-|\ve|-1}{2}
   \left(W_{-\mathbf{1}/2+\ve, \f12}^\triangle; \x^2,\x^2\right). 
\end{align*}
The two sums on the right-hand side can be combined by 
setting $\ve \in \{0,1\}^{d+1}$. Thus, changing variables $\x \mapsto (\sqrt{x_1},\ldots, \sqrt{x_d})$ and setting $x_{d+1} = 1-|\x|$, 
we obtain
\begin{align*}
   \sum_{\substack{\ve \in \{0,1\}^{d+1} \\ \frac{n-|\ve|}2 \in \NN_0}}
      P_{\frac{n-|\ve|}{2}}\left(W_{-\mathbf{1}/2 + \ve}^\triangle; \x,\x\right)
   =  Z_n^{\frac{d-1}2}(1)
\end{align*}
by \eqref{eq:PellSph}. In particular, setting $n \mapsto  2n$, we have proved \eqref{eq:reprod:simplex}. Summing up
this identity gives \eqref{eq:reprod:simplex2}, as can be seen be verifying the sum over binomial coefficients. 
\end{proof}

Like the case for the unit ball, \eqref{eq:reprod:simplex} is the same as \eqref{th-simplex-1}, whereas \eqref{eq:reprod:simplex2}
is equivalent to \eqref{th-simplex-2} by \eqref{ortho-poly-0}.

\subsection{Cube}
The classical orthogonal polynomials on $[-1,1]^d$ are associated with the weight function 
$$
  W_{\boldsymbol{\alpha}}^\square(\x) = \prod_{i=1}^d (1-x_i^2)^{\alpha_i}, \qquad \alpha_i > -1,
$$
and they are products of univariate Gegenbauer polynomials. More precisely, an orthonormal basis for 
$\CV_n(\boldsymbol{\alpha}^{\square}; [-1,1]^d)$ is given by 
$$
   P_\kb\left(W_{\boldsymbol{\alpha}}^\square; \x\right) = \prod_{j=1}^d \wh C_{k_j}^{\alpha_j+\f12} (x_j), \qquad |\kb| = n, \quad \kb \in \NN_0^d,
$$
where $\wh C_n^{\lambda}$ denotes the orthonormal Gegenbauer polynomial of degree $n$. It is worth mentioning that 
orthonormality is defined with respect to the probability measure. In particular, normalized Chebyshev weight 
functions are $\frac{1}{\pi} (1-x^2)^{-\f12}$ and $\frac{1}{2 \pi} (1-x^2)^{\f12}$, so that the normalized Chebyshev 
polynomials $\wh T_n$ of the first kind are 
$$
  \wh T_n(x) = \sqrt{\delta(k)} \,T_n(x), \quad n \ge 0, \quad \hbox{where} \quad \delta(k) = \begin{cases}1 & k=0, \\ 2 & k >0. \end{cases}
$$
whereas the Chebyshev polynomials $U_n$ of the second kind are already orthonormal.  
 
A closed-form formula for the reproducing kernel for $\CV_n(W_{- \mathbf{1} /2}^{\square}, [-1,1]^d)$ was derived in \cite{X95} 
and \cite{BX}, and further explored in \cite{BJX} recently. It shows, in particular, that if $\t = (\t_1, \ldots, \t_d) \in [0, \pi]^d$ 
and $\kb = (k_1,\ldots, k_d)$, then 
\begin{equation}\label{eq:divided-diff}
  \sum_{|\kb| = n, \kb \in \ZZ^d} \e^{\i \kb \cdot \t}  = [\cos \t_1, \cos \t_2, \cdots, \cos \t_d] H_{n,d},
\end{equation}
where $[x_1,\ldots, x_d] f$ denotes the divided difference of $f$ with knots $x_1,\ldots, x_d$ and
\begin{equation*}
    H_{n,d}(\cos \t) =  2 (-1)^{\lfloor \f{d-1}{2}\rfloor} (\sin \t)^{d-1} \times 
      \begin{cases} - \sin (n \t) & \hbox{for $d$ even}, \\ \cos (n \t) & \hbox{for $d$ odd} \end{cases}
\end{equation*}
for $n \ge 1$ and 
$$
   H_{0,d}(\cos \t) =  2 (-1)^{\lfloor \f{d-1}{2}\rfloor} (\sin \t)^{d-1} \cos \f{\t}{2} \times 
      \begin{cases} \cos \f \t 2 & \hbox{for $d$ even}, \\ \sin \f \t 2 & \hbox{for $d$ odd.} \end{cases}
$$
Recall that the divided difference is defined inductively by 
$$
   [x] f = f(x) \quad \hbox{and} \quad [x_0,\ldots,x_m]f = \frac{[x_0,\ldots,x_{m-1}]f -  [x_1,\ldots,x_m]f}{x_0-x_m}. 
$$
It is a symmetric function of the knots $x_0,\ldots, x_m$, which may coalesce. In particular, if all knots coalesce and if 
$f$ is sufficiently differentiable, then the divided difference collapses to
$$
 [x_0,\ldots,x_m]f  = \frac{f^{(m)}(x_0)}{m!} \quad \hbox{if $x_0 = x_1 = \cdots =x_m$}. 
$$
it follows that the cardinality of $\{\kb \in \NN_0^d: |\kb| = n\}$ is given by 
\begin{align} \label{eq:Mnd}
   M_{n,d}^\square \,& = \# \{\kb \in \ZZ^d: |\kb| =n\}= \frac{H_{n,d}^{(d-1)} (1)}{(d-1)!} =: \frac{h_{n,d}}{(d-1)!}. 
\end{align}
for $n \ge 1$ and $M_{0,d}^\square = 1$. The value of $h_{n,d}$ is given by \cite[Lemma 2.3]{BJX}. We shall give another
formula for $h_{n,d}$ after the proof of the following proposition. 

\begin{prop}
For $\x = (\cos \t_1,\ldots, \cos \t_d)$ and $\y = (\cos \phi_1,\ldots, \cos \phi_d)$, then
$$
   \sum_{|\kb| = n, \kb \in \ZZ^d} \e^{\i \kb \cdot (\t - \phi)} = \sum_{\ve \in \{0,1\}^d} 2^{|\ve|}
    \Bigg[ \prod_{j=1}^d  (1-x_j^2)^{\ve_j/2}(1-y_j^2)^{\ve_j/2} \Bigg]
       P_{n-|\ve|}\left(W_{-\mathbf{1}/2+ \ve}^\square; \x,\y\right). 
$$
\end{prop}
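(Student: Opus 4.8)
The plan is to reduce the multivariate identity to its one-dimensional instance and then recombine using the fact that the orthonormal polynomials on the cube are tensor products of Gegenbauer polynomials. First I would record the univariate identity. Setting $g_m(\beta) := \sum_{|k|=m,\, k \in \ZZ} \e^{\i k \beta}$, one has $g_0 = 1$ and $g_m(\beta) = 2\cos(m\beta)$ for $m \ge 1$. Using $\cos\big(m(\theta-\phi)\big) = \cos(m\theta)\cos(m\phi) + \sin(m\theta)\sin(m\phi)$ together with $T_m(\cos\theta) = \cos(m\theta)$, $U_{m-1}(\cos\theta) = \sin(m\theta)/\sin\theta$ (valid since $\sqrt{1-\cos^2\theta}=\sin\theta$ for $\theta\in[0,\pi]$), and recalling that $\wh T_m(x)\wh T_m(y) = 2 T_m(x)T_m(y)$ for $m\ge 1$ while $\wh T_0(x)\wh T_0(y)=1$, I obtain, with $x=\cos\theta$, $y=\cos\phi$ and the convention $U_{-1}=0$,
$$
   g_m(\theta-\phi) = \wh T_m(x)\wh T_m(y) + 2\sqrt{1-x^2}\sqrt{1-y^2}\, U_{m-1}(x)U_{m-1}(y), \qquad m\ge 0.
$$
In reproducing-kernel language this is precisely the proposition in dimension one, because $P_m(W_{-1/2}^\square; x,y) = \wh T_m(x)\wh T_m(y)$ and $P_{m-1}(W_{1/2}^\square; x,y) = U_{m-1}(x)U_{m-1}(y)$. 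I would verify the cases $m=0$ and $m\ge 1$ separately, as this is where the normalization $\delta(0)=1$, $\delta(m)=2$ and the convention $U_{-1}=0$ have to be matched.

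Next I would pass to $d$ variables. Since $\e^{\i \kb\cdot(\t-\phi)} = \prod_{j=1}^d \e^{\i k_j(\theta_j-\phi_j)}$ and the set $\{\kb\in\ZZ^d : |\kb|=n\}$ is the disjoint union over $m_1+\cdots+m_d=n$, $m_j\ge 0$, of the products $\prod_j\{k\in\ZZ : |k|=m_j\}$, the left-hand side factors as a convolution,
$$
   \sum_{|\kb|=n,\, \kb\in\ZZ^d} \e^{\i \kb\cdot(\t-\phi)} = \sum_{\substack{m_1+\cdots+m_d=n \\ m_j\ge 0}} \prod_{j=1}^d g_{m_j}(\theta_j-\phi_j).
$$
Substituting the univariate identity I write $g_m = A_m^{(0)} + A_m^{(1)}$ with $A_m^{(0)}(x,y)=\wh T_m(x)\wh T_m(y)$ and $A_m^{(1)}(x,y)=2\sqrt{1-x^2}\sqrt{1-y^2}\,U_{m-1}(x)U_{m-1}(y)$, expand the product over $j$ into $2^d$ terms indexed by $\ve\in\{0,1\}^d$, and interchange the finite sums to get
$$
   \sum_{\ve\in\{0,1\}^d} 2^{|\ve|} \Bigg[\prod_{j=1}^d (1-x_j^2)^{\ve_j/2}(1-y_j^2)^{\ve_j/2}\Bigg] \sum_{\substack{m_1+\cdots+m_d=n \\ m_j\ge 0}} \prod_{j:\,\ve_j=0} \wh T_{m_j}(x_j)\wh T_{m_j}(y_j) \prod_{j:\,\ve_j=1} U_{m_j-1}(x_j)U_{m_j-1}(y_j).
$$

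Finally I would recognize each inner sum as a multivariate reproducing kernel. For fixed $\ve$ I re-index by $k_j = m_j - \ve_j$; because $U_{-1}=0$ annihilates the terms with $m_j=0$ on the coordinates where $\ve_j=1$, the map $m_j\mapsto k_j$ carries the simplex $\{\sum_j m_j = n\}$ bijectively onto $\{\kb\in\NN_0^d : |\kb|=n-|\ve|\}$. Using $\wh C_k^0=\wh T_k$ and $\wh C_k^1=U_k$, the tensor-product orthonormal basis of $\CV_{n-|\ve|}(W_{-\mathbf{1}/2+\ve}^\square;\square^d)$ yields
$$
   \sum_{|\kb|=n-|\ve|} \prod_{j:\,\ve_j=0} \wh T_{k_j}(x_j)\wh T_{k_j}(y_j) \prod_{j:\,\ve_j=1} U_{k_j}(x_j)U_{k_j}(y_j) = P_{n-|\ve|}\!\left(W_{-\mathbf{1}/2+\ve}^\square; \x,\y\right),
$$
which assembles exactly the right-hand side of the proposition. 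The genuinely delicate step, and the one I expect to require the most care, is this re-indexing together with the boundary bookkeeping in the univariate identity: one must check that the normalization $\delta(\cdot)$ of the first-kind polynomials and the convention $U_{-1}=0$ are precisely what make the degree shift $|\kb|=n-|\ve|$ and the factor $2^{|\ve|}$ come out correctly. Everything else is a routine expansion and interchange of finite sums.
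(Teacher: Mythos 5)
Your proof is correct and takes essentially the same route as the paper's: both expand the exponential sum coordinate-by-coordinate via $\cos(a-b)=\cos a\cos b+\sin a\sin b$ into $2^{d}$ terms indexed by $\ve\in\{0,1\}^d$, and identify the resulting products of $\wh T_{k}$'s and $U_{k}$'s with the tensor-product orthonormal bases of $\CV_{n-|\ve|}(W^{\square}_{-\mathbf{1}/2+\ve})$. Your univariate-first organization, with the explicit re-indexing $k_j=m_j-\ve_j$ and the convention $U_{-1}=0$, makes the degree shift to $n-|\ve|$ more transparent than the paper's presentation via orbits of $\kb$ under sign changes, but the underlying computation is identical.
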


\begin{proof}
For $\kb \in \NN_0^d$, we write $\s(\kb) = \{\jb \in \ZZ^d: \jb = (\pm k_1,\ldots, \pm k_d)\}$ be the orbit of $\kb$ under $\ZZ_2^d$. 
For $\kb \in \NN_0^d$ with $k_j \ne 0$, $1 \le j \le d$, 
it is easy to see by symmetry that 
\begin{align*}
  \sum_{ \jb \in \s(\kb) }  \e^{\i \, \jb \cdot (\t - \s)}  \, &= 
  2^d \sum_{k_{i_1},\ldots, k_{i_d}} \prod_{j=1}^\ell \cos( k_{i_j} s_{i_j}) \cos( k_{i_j} t_{i_j})
  \prod_{j=\ell+1}^d \sin( k_{i_j} s_{i_j}) \sin( k_{i_j} t_{i_j}) \\
  & = 
   \sum_{\ve \in \{0,1\}^d} 2^{|\ve|} \prod_{i=1}^d (1-x_i^2)^{\frac{\ve_i}{2}} (1-y_i^2)^{\frac{\ve_i}{2}} P_\kb\left(W_{-\one/2 + \ve}; \x\right) P_\kb \left(W_{-\one/2 + \ve}; \y\right),
\end{align*}
where the sum in the left-hand side of the first equation is over all possible distribution of $\kb = (k_1,\ldots,k_d)$, 
and we have used the fact that $P_\kb\left(W_{-\one/2 + \ve}; \x\right)$ contains $d-|\ve|$ many $\wh T_{k_j}(x_j) = \sqrt{2} \cos(k_j \t_j)$ 
in the second identity. The sum over $\s(\kb)$ with, say, exactly $m$ elements of $\kb$ non-zero is the above identity with  $d$ 
replaced by $m$. Recall that the orthonormal polynomial of degree 0 is equal to 1 by our normalization, we see that the identity 
remains true for $\kb \in \NN_0^d$ that has some zero components. Since summing over $\{\mb \in \ZZ^d: |\mb| =n\}$ is the 
same as summing over $\{\kb \in \NN_0^d: \mb \in \s(\kb), \, |\mb| = n\}$, summing the identity over $\kb \in \NN_0^d$ and $|\kb| =n$
proves the stated identity. 
\end{proof}
 
Setting $\y = \x$ and applying \eqref{eq:divided-diff} in the identity of the above proposition, we obtain the 
extended Pell equations for the cube:
 
\begin{cor} 
For $d \ge 1$, $n = 0,1,2,\ldots$ and $\x\in [-1,1]^d$,  
\begin{align} \label{eq:cube1}
 \sum_{\ve \in \{0,1\}^d} 2^{|\ve|} & \Bigg[ \prod_{j=1}^d  (1-x_j^2)^{\ve}\Bigg]
       P_{n-|\ve|}\left(W_{-\mathbf{1}/2+ \ve}^\square; \x,\x\right)  \\
    &  = \sum_{j=0}^d \binom{d}{j} \left[ \binom{d + n - j}{d} - \binom{d + n -1- j}{d}\right]  \notag
\end{align}
and, summing up the identity,  
\begin{equation} \label{eq:cube2}
 \sum_{\ve \in \{0,1\}^d} 2^{|\ve|}
    \Bigg[ \prod_{j=1}^d  (1-x_j^2)^{\ve}\Bigg]
       K_{n-|\ve|}\left(W_{-\mathbf{1}/2+ \ve}^\square; \x,\x\right) =  \sum_{j=0}^d \binom{d}{j} \binom{d + n - j}{d}.
\end{equation}
\end{cor}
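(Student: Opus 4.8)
The plan is to specialize the diagonal case of the preceding proposition and then evaluate both sides in closed form. Setting $\y = \x$, i.e. $\phi = \t$, collapses the exponential sum on the proposition's left-hand side to
\[
  \sum_{|\kb| = n,\ \kb \in \ZZ^d} 1 = M_{n,d}^\square,
\]
the cardinality already recorded in \eqref{eq:Mnd}, while each factor $(1-x_j^2)^{\ve_j/2}(1-y_j^2)^{\ve_j/2}$ becomes $(1-x_j^2)^{\ve_j}$, so that the right-hand side turns into exactly the left-hand side of \eqref{eq:cube1}. Hence \eqref{eq:cube1} reduces to the purely combinatorial task of writing the number of lattice points on the $\ell^1$-sphere, $M_{n,d}^\square = \#\{\kb \in \ZZ^d : |\kb| = n\}$, in the stated binomial form.

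To evaluate $M_{n,d}^\square$ I would pass to generating functions. Since the weight $|\kb| = \sum_j |k_j|$ is additive across coordinates and $\sum_{k \in \ZZ} t^{|k|} = (1+t)/(1-t)$, the coordinates decouple and
\[
  \sum_{n \ge 0} M_{n,d}^\square\, t^n = \Bigl(\tfrac{1+t}{1-t}\Bigr)^d = (1+t)^d (1-t)^{-d}.
\]
Expanding $(1+t)^d = \sum_{j=0}^d \binom{d}{j} t^j$ and $(1-t)^{-d} = \sum_{m \ge 0} \binom{d+m-1}{d-1} t^m$ and reading off the coefficient of $t^n$ yields $M_{n,d}^\square = \sum_{j=0}^d \binom{d}{j}\binom{d+n-j-1}{d-1}$; applying Pascal's rule $\binom{d+n-j-1}{d-1} = \binom{d+n-j}{d} - \binom{d+n-1-j}{d}$ to each term then reproduces the right-hand side of \eqref{eq:cube1} verbatim. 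This is the substantive step, and it simultaneously furnishes the promised alternative formula $h_{n,d} = (d-1)!\,M_{n,d}^\square$ for the quantity appearing in \eqref{eq:Mnd}.

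For \eqref{eq:cube2} I would sum \eqref{eq:cube1} over the degree: replacing $n$ by $m$ and summing $m = 0, \ldots, n$. For each fixed $\ve$, the inner sum $\sum_{m=0}^n P_{m-|\ve|}(W_{-\mathbf{1}/2+\ve}^\square; \x, \x)$ has vanishing terms whenever $m < |\ve|$, and after reindexing $k = m - |\ve|$ the surviving terms assemble into the Christoffel--Darboux kernel $K_{n-|\ve|}(W_{-\mathbf{1}/2+\ve}^\square; \x, \x)$, giving the left-hand side of \eqref{eq:cube2}. On the right, the hockey-stick identity $\sum_{m=0}^n \binom{d+m-1-j}{d-1} = \binom{d+n-j}{d}$ converts the summed right-hand side of \eqref{eq:cube1} into $\sum_{j=0}^d \binom{d}{j}\binom{d+n-j}{d}$, as required.

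The only delicate point is the bookkeeping at the low-degree boundary: for $|\ve| > n$ the polynomial $P_{n-|\ve|}$ must be read as zero, and at $n = 0$ the binomial closed forms agree with the honest counts only under the standard convention that binomial coefficients with a negative upper argument vanish. Once that convention is fixed consistently, everything outside the generating-function evaluation of $M_{n,d}^\square$ is routine.
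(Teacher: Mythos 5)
Your proof is correct and follows essentially the same route as the paper: specialize the proposition at $\y=\x$, reduce \eqref{eq:cube1} to the combinatorial identity for $M_{n,d}^\square$ via the generating function $\bigl(\tfrac{1+r}{1-r}\bigr)^d$, and obtain \eqref{eq:cube2} by summing over the degree. The only (harmless) difference is that you derive this generating function directly by decoupling the coordinates of the lattice-point count, whereas the paper imports it from the cited formula for $h_{n,d}(u)$ and then matches generating functions on both sides.
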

 
\begin{proof}
Setting $\y= \x$ as indicated gives the identity \eqref{eq:cube1} with the right-hand side equal to $M_{n,d}^\square$. Thus,
it is sufficient to establish the identity:
\begin{equation}\label{eq:Mnd2}
 M_{n,d}^\square = \frac{h_{n,d}}{(d-1)!} = \sum_{j=0}^d \binom{d}{j} \left[ \binom{d + n - j}{d} - \binom{d + n -1- j}{d}\right].
\end{equation}
For this we use the following identity established in \cite[Lemma 2.4]{BJX}, 
$$
  \sum_{n=0}^\infty \frac{h_{n,d}(u)}{(d-1)!} r^n = \frac{(1-r^2)^d}{(1-2 r u + r^2)^d}, \qquad 0 \le r < 1,
$$
which implies, in particular, that the following generating function for $h_{n,d}(1)$, 
$$
  \sum_{n=0}^\infty \frac{h_{n,d}(1)}{(d-1)!} r^n = \frac{(1+r)^d}{(1- r)^d}, \qquad 0 \le r < 1,
$$
We now show that the right-hand side of \eqref{eq:Mnd2} satisfies the same generating function. Indeed, a quick 
computation shows that 
\begin{align*}
  \sum_{n=0}^\infty & \sum_{j=0}^d \binom{d}{j} \left[ \binom{d + n - j}{d} - \binom{d + n -1- j}{d}\right]  r^n\\
   &  = (1-r)  \sum_{j=0}^d \binom{d}{j}  \sum_{n=0}^\infty  \binom{d + n - j}{d} r^n \\
   &  = (1-r)  \sum_{j=0}^d \binom{d}{j}  \sum_{n=0}^\infty  \binom{d + n}{d} r^{n+j} \\
   &  = (1-r)  (1+r)^d (1-r)^{-d-1}  = \frac{(1+r)^d}{(1-r)^d}. 
\end{align*}
Since both sides of \eqref{eq:Mnd2} satisfy the same generating function, they are equal. This proves \eqref{eq:cube1}.
Finally, summing over $m$ of \eqref{eq:cube1} proves \eqref{eq:cube2}. 
\end{proof}

As in the two previous cases, \eqref{eq:cube1} is the same as \eqref{th-box-1}, whereas \eqref{eq:cube2}
is equivalent to \eqref{th-box-2} by \eqref{ortho-poly-0}.

\section{Conclusion}

We have considered a set $\Omega \subset\R^d$ that is the unit ball, the simplex, or the unit cube in $\R^d$ in this paper. 
In each case, we proved a remarkable property of the orthonormal polynomials (of all degrees) with respect to the equilibrium 
measure $\mu$ of $\Omega$ and related measures $g\cdot\mu$ on $\Omega$, absolutely continuous with respect to 
 $\mu$, and whose density $g$ is some product of polynomials that define the boundary of $\Omega$. This property is a multivariate analog and generalization of Pell's polynomial equation satisfied by the Chebyshev polynomial 
 of the first and second kind for $\Omega=[-1,1]\subset\R$. The property yields a similar and remarkable property for
 the Christoffel functions associated with those measures, already revealed in \cite{cras} for $\Omega=[-1,1]$ and only 
 partially for the multivariate cases. Moreover, this (identity) property can be interpreted 
 as a distinguished algebraic certificate \emph{\`a la Putinar} that the constant polynomial $\1$ is positive on the set $\Omega$.
 This distinguished representation of $\1$ is a result of a duality between a pair of convex cones, due to Nesterov.
 We hope that our result will stimulate further investigation on the links of the Christoffel functions with those seemingly unrelated 
 fields.

 \subsection*{Acknowledgement}
 This work was initiated when both authors were hosted by the
 American Institute of Mathematics (AIM) in San Jos\'e, CA, during a research week that was part of the SQuaRE program of AIM, and so the authors gratefully acknowledge support from AIM.  The authors also gratefully acknowledge helpful and illuminating comments from N. Levenberg on pluripotential theory  in $\mathbb{C}^d$  and the Riesz $s$-kernel on $\R^d$.

\end{document}